\documentclass[11pt]{amsart}

\usepackage{latexsym}
\usepackage{amsmath}
\usepackage{amsfonts}
\usepackage{amssymb}
\usepackage{enumitem}
\usepackage {tikz}
\usepackage{color}
\usetikzlibrary {positioning}
\usetikzlibrary{arrows,shapes}
\usepackage{color}
\usepackage{graphicx}
\usepackage{latexsym}
\usepackage{soul}

\usepackage[noend]{algorithmic} 
\usepackage{algorithm,caption}
\algsetup{indent=2em} 
 
\usepackage{tikz}
\usepackage{color}
\usepackage{placeins}
\usetikzlibrary{positioning}
\usetikzlibrary{arrows,shapes}

\newtheorem{theorem}{Theorem}[section]
\newtheorem{lemma}[theorem]{Lemma}

\newtheorem{proposition}[theorem]{Proposition}

\theoremstyle{definition}

\theoremstyle{remark}

\numberwithin{equation}{section}

\newcommand\Ga{\mathcal{G}^\mathrm{apex}}

\begin{document}

\title[Apex Graphs and Cographs]{Apex Graphs and Cographs}

\author[Singh]{Jagdeep Singh}
\address{Department of Mathematics and Statistics\\
Binghamton University\\
Binghamton, New York}
\email{singhjagdeep070@gmail.com}

\author[Sivaraman]{Vaidy Sivaraman}
\address{Department of Mathematics and Statistics\\
Mississippi State University\\
Mississippi}
\email{vaidysivaraman@gmail.com}

\author[Zaslavsky]{Thomas Zaslavsky}
\address{Department of Mathematics and Statistics\\
Binghamton University\\
Binghamton, New York}
\email{zaslav@math.binghamton.edu}

\keywords{Apex graph, Cograph, Forbidden induced subgraph.}

\subjclass[2010]{05C75}
\date{\today}

\begin{abstract}
A class $\mathcal{G}$ of graphs is called hereditary if it is closed under taking induced subgraphs. We denote by $\Ga$ the class of graphs $G$ that contain a vertex $v$ such that $G-v$ is in $\mathcal{G}$. Borowiecki, Drgas-Burchardt, and Sidorowicz proved that if a hereditary class $\mathcal{G}$ has finitely many forbidden induced subgraphs, then so does $\Ga$. We provide an elementary proof of this result.  

The hereditary class of cographs consists of all graphs $G$ that can be generated from $K_1$ using complementation and disjoint union. A graph is an apex cograph if it contains a vertex whose deletion results in a cograph. Cographs are precisely the graphs that do not have the $4$-vertex path as an induced subgraph. Our main result finds all such forbidden induced subgraphs for the class of apex cographs.
\end{abstract}

\maketitle

\section{Introduction}
\label{intro}
We consider finite, simple, and undirected graphs. An {\bf induced subgraph} of a graph $G$ is a graph $H$ that can be obtained from $G$ by a sequence of vertex deletions; we write $G[T]$ for the subgraph if $T$ is the set of remaining vertices. We say that $G$ {\it contains} $H$ when $H$ is an induced subgraph of $G$. A class $\mathcal{G}$ of graphs is called {\bf hereditary} if it is closed under taking induced subgraphs. 

A {\bf cograph} is a graph that can be generated from the single-vertex graph $K_1$ using the operations of complementation and disjoint union. 
Due to the following characterization, cographs are also called $P_4$-free graphs \cite{corneil}. 

\begin{theorem}
\label{cographs_characterisation}
A graph $G$ is a cograph if and only if $G$ does not contain the path $P_4$ on four vertices as an induced subgraph.
\end{theorem}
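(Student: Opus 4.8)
The plan is to prove both implications, leaning on two elementary properties of $P_4$: it is connected, and it is self-complementary, so that a graph $G$ contains an induced $P_4$ if and only if $\overline{G}$ does (complementation commutes with taking induced subgraphs).

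For the forward direction, that every cograph is $P_4$-free, I would argue by structural induction on the construction of $G$ from $K_1$. The base case $K_1$ is immediate. If $G = \overline{G'}$ for a $P_4$-free cograph $G'$, then self-complementarity of $P_4$ forbids an induced $P_4$ in $G$. If $G = G_1 \cup G_2$ is a disjoint union of $P_4$-free cographs, then any four vertices inducing a $P_4$ would have to lie in a single part, because $P_4$ is connected, contradicting the inductive hypothesis. Hence no cograph contains an induced $P_4$.

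For the converse, that every $P_4$-free graph is a cograph, I would induct on $|V(G)|$, with base case $K_1$. The step rests on a dichotomy lemma of Seinsche type \cite{sein}: \emph{every $P_4$-free graph on at least two vertices is disconnected or has disconnected complement.} Granting this, if $G$ is disconnected, each component is a smaller $P_4$-free graph, hence a cograph by induction, and $G$ is their disjoint union. If instead $\overline{G}$ is disconnected, then $\overline{G}$ is $P_4$-free by self-complementarity and disconnected, so by the previous case it is a cograph; therefore $G = \overline{\overline{G}}$ is the complement of a cograph, hence a cograph.

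The main obstacle is the dichotomy lemma, which I would prove by induction on $n = |V(G)|$. Assume for contradiction that $G$ and $\overline{G}$ are both connected; the base case $n = 2$ is trivial, so take $n \geq 3$ and delete a vertex $v$. By induction (and the complement symmetry) we may assume $G - v$ is disconnected, with components $C_1, \dots, C_m$; since $G$ is connected, $v$ has a neighbour in each $C_i$. I then claim $v$ is adjacent to \emph{every} other vertex: if $v$ missed some $x \in C_i$, then walking along a path in $C_i$ from a neighbour of $v$ to $x$ yields adjacent vertices $p \sim q$ with $v \sim p$ and $v \not\sim q$, and picking any neighbour $b$ of $v$ in another component $C_j$ produces the induced path $q - p - v - b$, a contradiction. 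But if $v$ is adjacent to all other vertices, then $v$ is isolated in $\overline{G}$, contradicting connectedness of $\overline{G}$. This rules out both alternatives and establishes the lemma, completing the proof.
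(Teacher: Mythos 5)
Your proof is correct. Note, though, that the paper does not prove Theorem \ref{cographs_characterisation} at all: it is quoted as a known result of Corneil, Lerchs, and Stewart Burlingham \cite{corneil}, so there is no in-paper argument to compare against. What you have written is the classical self-contained proof: the easy direction by structural induction using that $P_4$ is connected and self-complementary, and the hard direction via Seinsche's dichotomy (every $P_4$-free graph on at least two vertices is disconnected or co-disconnected), itself proved by the standard ``first non-neighbour along a path'' argument producing an induced $q$--$p$--$v$--$b$ path. All the steps check out, including the two points that are easiest to fumble: in the converse, when $\overline{G}$ is disconnected you correctly apply the induction hypothesis to the components of $\overline{G}$ (which have strictly fewer vertices) rather than to $\overline{G}$ itself; and in the dichotomy lemma, the ``we may assume $G-v$ is disconnected'' reduction is legitimate because the standing hypothesis ($G$ and $\overline{G}$ both connected and $P_4$-free) is symmetric under complementation. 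The only thing your write-up buys beyond the paper is self-containment; conversely, the paper's choice to cite rather than prove is reasonable since this is a well-known theorem and not where the paper's novelty lies.
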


The {\bf apex class, $\Ga$,} for a class $\mathcal{G}$ is the class of graphs $G$ that contain a vertex $v$ such that $G-v$ is in $\mathcal{G}$. Note that if a graph class $\mathcal{G}$ is hereditary, then $\Ga$ is also hereditary. {A graph $G$ is an {\bf apex cograph} if it has a vertex $v$ such that $G-v$ is a cograph.}  {In Theorem \ref{cograph_apex} we characterize apex cographs in the spirit of Theorem \ref{cographs_characterisation}.}

For a hereditary class $\mathcal{G}$, we call a graph $H$ a {\bf forbidden induced subgraph} for $\mathcal{G}$ if $H$ is not in $\mathcal{G}$ but every proper induced subgraph of $H$ is in $\mathcal{G}$. Thus, a graph $G$ is in $\mathcal{G}$ if and only if $G$ contains no forbidden induced subgraph for $\mathcal{G}$. It is easy to see that the class of cographs is hereditary; by Theorem \ref{cographs_characterisation} it has only one forbidden induced subgraph. Borowiecki, Drgas-Burchardt and Sidorowicz \cite{boro} proved that in general, if a hereditary class $\mathcal{G}$ has a finite number of forbidden induced subgraphs, then so does $\Ga$. The precise statement is the following theorem.
 
\begin{theorem}
\label{main_apex}
Let $\mathcal{G}$ be a hereditary class of graphs and let $c$ denote the maximum number of vertices in a forbidden induced subgraph for $\mathcal{G}$. Then every forbidden induced subgraph for $\Ga$ has at most $\lfloor{\frac14(c+2)^2} \rfloor$ vertices.
\end{theorem}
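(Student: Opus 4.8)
The plan is to recast membership in $\Ga$ purely in terms of the forbidden induced subgraphs of $\mathcal{G}$. For a graph $H$, let $\mathcal{F}(H)$ denote the family of vertex sets of those induced subgraphs of $H$ that are forbidden induced subgraphs for $\mathcal{G}$; every member has at most $c$ vertices. The first step is the observation that for a vertex $v$ we have $H-v\in\mathcal{G}$ if and only if $v$ lies in every member of $\mathcal{F}(H)$ (a forbidden $\mathcal{G}$-subgraph survives in $H-v$ exactly when its vertex set avoids $v$). Consequently $H\in\Ga$ if and only if $\bigcap\mathcal{F}(H)\neq\emptyset$, i.e.\ $\mathcal{F}(H)$ has a transversal of size $1$; and $H-u\in\Ga$ if and only if the members of $\mathcal{F}(H)$ avoiding $u$ have a common vertex. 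Thus if $H$ is a forbidden induced subgraph for $\Ga$ then $\mathcal{F}(H)$ has transversal number $\tau\ge 2$, while minimality (using that $\Ga$ is itself hereditary, which I would verify first) forces that for every vertex $u$ there is a vertex $w$ with $\{u,w\}$ a transversal of $\mathcal{F}(H)$. Hence $\tau=2$, every vertex extends to a transversal of size $2$, and every vertex lies in some member (otherwise the members avoiding it would be all members, giving $\tau=1$). The theorem reduces to the purely combinatorial claim: a family $\mathcal{F}$ of sets, each of size at most $c$, with these three properties has ground set of size at most $\lfloor\tfrac14(c+2)^2\rfloor$.

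For each vertex $u$ fix a partner $\sigma(u)$ with $\{u,\sigma(u)\}$ a transversal. The single fact that drives the counting is that for every member $F$ and every vertex $u$ the transversal $\{u,\sigma(u)\}$ meets $F$, so $u\in F$ or $\sigma(u)\in F$; equivalently $\sigma$ maps $V\setminus F$ into $F$. I would then split according to whether $\mathcal{F}$ contains two disjoint members. If $F_1,F_2\in\mathcal{F}$ are disjoint, any vertex $u\notin F_1\cup F_2$ would force $\sigma(u)\in F_1\cap F_2=\emptyset$, which is impossible; hence $V=F_1\cup F_2$ and $|V|\le 2c$. Since $(c+2)^2/4-2c=(c-2)^2/4\ge 0$, the bound holds with room to spare in this case.

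The remaining, and main, case is when $\mathcal{F}$ is an intersecting family. Here I would fix a transversal $\{a,b\}$, so every member contains $a$ or $b$, and choose a member $F_a$ containing $a$ but not $b$ and a member $F_b$ containing $b$ but not $a$ (both exist because $\tau\ge 2$). Their intersection $D=F_a\cap F_b$ is nonempty and avoids $a$ and $b$. Applying the fact above to $F_a$ and to $F_b$, every vertex outside $F_a\cup F_b$ has its partner in $D$, so the leftover vertices are organized by the value of $\sigma$ within the small set $D$; each class $\{u:\sigma(u)=d\}$ lies inside every member avoiding $d$, hence inside a single member, and so has bounded size. The heart of the proof is to balance $|F_a\cup F_b|$ against the number of leftover vertices so that the total is at most $\lfloor\tfrac14(c+2)^2\rfloor$: writing the count as a sum of two contributions whose widths sum to roughly $c+2$ and invoking the arithmetic--geometric mean inequality is what produces the quadratic bound. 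I expect the genuine obstacle to be exactly this optimization in the intersecting case — one must choose $F_a$ and $F_b$ judiciously (for instance of minimum size, or minimizing $|D|$) and estimate the leftover classes tightly, since the naive argument only yields $|V|\le c^2$, a factor of four too weak, and extracting the correct constant $\tfrac14$ is where the careful work lies.
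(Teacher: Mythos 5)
Your transversal reformulation of the problem is sound, and the parts you complete are correct and in fact mirror the paper's own skeleton: your equivalences ($H\in\Ga$ iff $\mathcal{F}(H)$ has a one-element transversal, minimality forcing $\tau=2$ with every vertex extending to a two-element transversal, every vertex lying in some member) and your disjoint-members case are exactly the paper's argument that if $S\cap T=\emptyset$ then $V=S\cup T$ and $|V|\le 2c$. The genuine gap is in the intersecting case, which is the heart of the theorem: there you establish only a bound of order $c^2$ and explicitly defer ``the careful work'' of extracting the constant $\tfrac14$. That deferred step is not a routine optimization over your existing inequalities; it needs an ingredient that is absent from the sketch, namely a lower bound on how much each ``class-containing'' member must overlap $F_a\cup F_b$, and no amount of AM--GM applied to what you have will produce it.

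Here is how the paper supplies that ingredient. Choose the two members $S,T$ (your $F_a,F_b$) so that $k=|S\cap T|$ is \emph{minimum over all pairs of distinct members}, write $S\cap T=\{v_1,\dots,v_k\}$ (your $D$), and for each $i$ pick a member $F_i$ avoiding $v_i$ (it exists since $\tau\ge 2$). Your own argument shows $V=S\cup T\cup\bigcup_i F_i$: a vertex outside this union could be deleted, and no single vertex of the rest hits $S$, $T$, and every $F_i$, since hitting both $S$ and $T$ forces it to be some $v_i$, which misses $F_i$. The missing lemma is this: since $F_i\ne S$ and $F_i\ne T$, minimality of $k$ gives $|F_i\cap S|\ge k$ and $|F_i\cap T|\ge k$; and since $v_i\in (S\cap T)\setminus F_i$, the set $F_i\cap S$ cannot lie inside $S\cap T$ (it would then have at most $k-1$ elements), so $F_i$ meets both $S\setminus T$ and $T\setminus S$, whence $|F_i\cap(S\cup T)|\ge k+1$. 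Thus each $F_i$ contributes at most $c-k-1$ vertices outside $S\cup T$, giving $|V|\le (2c-k)+k(c-k-1)=(c-k)(k+2)$. Only now does your anticipated balancing apply: the two factors sum to $c+2$, so $|V|\le\lfloor\tfrac14(c+2)^2\rfloor$. Your instinct to minimize $|D|$ was the right one, but without the overlap bound $|F_i\cap(S\cup T)|\ge k+1$ --- which is precisely where that minimality gets used --- the proof does not close.
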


We give a new, simple proof of Theorem \ref{main_apex} in Section 2.  As a consequence of Theorem \ref{main_apex}, the number of vertices of a forbidden induced subgraph for the class of apex cographs is at most nine.  {In Section 3, we show that there are none with nine vertices and we list all of the forbidden induced subgraphs.}

\begin{theorem}
 \label{cograph_apex}
 Let $G$ be a forbidden induced subgraph for the class of apex cographs. Then $5 \leq |V(G)| \leq 8$, and $G$ or its complement $\overline{G}$ is isomorphic to $C_5$ or one of the graphs in Figures \ref{six}, \ref{seven}, and \ref{eight}.
\end{theorem}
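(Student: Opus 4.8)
The plan is to recast the apex-cograph condition in terms of induced $P_4$'s and then carry out a structural enumeration. By Theorem \ref{cographs_characterisation}, $G-v$ is a cograph exactly when $v$ meets every induced $P_4$ of $G$. Hence $G$ is an apex cograph if and only if some vertex lies on every induced $P_4$, and $G$ fails to be an apex cograph if and only if its induced $P_4$'s have empty common intersection (with at least one $P_4$ present). Since deleting a vertex only destroys the $P_4$'s through it, this reformulation turns the two defining properties of a forbidden subgraph $G$ into: (i) for every vertex $v$ there is an induced $P_4$ avoiding $v$, and (ii) for every vertex $w$ the induced $P_4$'s of $G$ avoiding $w$ share a common vertex. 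I would first record that the class of cographs is self-complementary (because $\overline{P_4}\cong P_4$), so apex cographs, and therefore the family of forbidden subgraphs, are closed under complementation; this is exactly what lets us phrase the classification up to replacing $G$ by $\overline{G}$.

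For the size bounds, any graph on at most four vertices is an apex cograph, since deleting any vertex leaves at most three vertices and hence no $P_4$; thus $|V(G)|\ge 5$. A graph on exactly five vertices satisfies (i) only if $G-v\cong P_4$ for every $v$, and the unique such graph is $C_5$, which is indeed a forbidden subgraph. At the other end, applying Theorem \ref{main_apex} with $c=4$ gives $|V(G)|\le \lfloor \tfrac14(4+2)^2\rfloor = 9$, so it remains to enumerate the forbidden subgraphs on six, seven, eight, and nine vertices and to show there are none of order nine.

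The enumeration rests on two tools. First, because $G-x$ is an apex cograph for each vertex $x$, there is a vertex $y$ with $G-\{x,y\}$ a cograph; so $G$ arises from a cograph on at most seven vertices by adding two vertices $x,y$ with prescribed adjacencies, and the cotree of that cograph organizes the possibilities. Second is a twin-reduction lemma: if $a,b,c$ are pairwise twins of the same kind (mutually adjacent or mutually nonadjacent, all with the same neighbours outside $\{a,b,c\}$), then no induced $P_4$ uses two of them, and replacing one twin by another carries $P_4$'s to $P_4$'s; one then checks that deleting a third twin preserves property (i), contradicting minimality. Hence in a forbidden subgraph every class of like twins has size at most two, which bounds the multiplicity of the leaves hanging at each node of the cotree. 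Combining this with the cotree structure and the two added vertices restricts $G$ to a finite, explicitly checkable list; for each candidate one verifies (i) and (ii) directly, discards the non-minimal ones, and reduces modulo complementation to the graphs in the statement. The bound of nine is then seen to be slack: the twin restriction together with the two-apex structure leaves no minimal example on nine vertices.

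The main obstacle is the enumeration in the previous paragraph. The difficulty is not any single deep step but the volume and bookkeeping of the case analysis: the two extra vertices $x,y$ may attach to the cograph $C=G-\{x,y\}$ in many inequivalent ways, twins of $C$ can be split by $x$ or $y$ (so the twin bound must be applied to $G$, not to $C$), and for each surviving configuration both the non-membership condition (i) and all of the minimality conditions (ii) must be confirmed. Organizing the work by the top split of the cotree (union versus join, the two being interchanged by complementation) and by the adjacency types of $x$ and $y$ keeps the casework finite; verifying that exactly $C_5$ and the graphs of Figures \ref{six}, \ref{seven}, and \ref{eight} survive, and that order nine is empty, is the technical heart of the argument and the step most naturally supported by careful tabulation or machine checking.
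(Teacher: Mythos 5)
Your reduction steps are sound and essentially match the paper's: closure of the forbidden family under complementation (the paper's Lemma \ref{closed_complement}), the observation that graphs on at most four vertices are apex cographs, the counting argument that $C_5$ is the unique five-vertex forbidden subgraph, and the bound $|V(G)|\le 9$ from Theorem \ref{main_apex} with $c=4$. Your twin-reduction lemma is also correct: no induced $P_4$ contains two like twins, so if $a,b,c$ were pairwise like twins in a forbidden subgraph $G$, then for any vertex $v \neq c$ an induced $P_4$ of $G-v$ either avoids $c$ or can have $c$ swapped with whichever of $a,b$ is different from $v$, showing that $G-c$ is not an apex cograph and contradicting minimality. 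That is a genuinely useful observation which does not appear in the paper.

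The gap is that the theorem's actual content --- which graphs on six, seven, and eight vertices occur, and that none occur on nine --- is never established. After the reduction you assert that the two-apex structure ($G-\{x,y\}$ a cograph), the cotree of $C=G-\{x,y\}$, and the twin bound ``restrict $G$ to a finite, explicitly checkable list'' and that ``the bound of nine is then seen to be slack''; but finiteness was already guaranteed by $|V(G)|\le 9$, and neither the list nor the emptiness at order nine is actually derived. Moreover, the twin bound holds for $G$, not for $C$ (twins of $C$ can be split by $x$ or $y$, as you yourself note), so it does not visibly shrink the enumeration of cographs $C$ on up to seven vertices together with all attachments of $x$ and $y$ --- a search space of many thousands of configurations. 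The paper resolves orders five through eight by an explicit computer search, and devotes the bulk of Section 3 (Lemmas \ref{9_vtx_one}--\ref{9_vtx_four} and Proposition \ref{main_9_vtx}, analyzing how three mutually intersecting $P_4$'s can sit inside a nine-vertex graph via the $L$-, $T$-, and $X$-cases) to a hand proof that order nine is empty. Your proposal replaces both with an unexecuted plan plus an assertion of its outcome, so as written it does not prove the theorem; to complete it you would either have to carry out the cotree/twin case analysis in full (and there is no evidence it closes at nine vertices more easily than the paper's argument does) or fall back on the same kind of exhaustive machine check the paper uses for orders five through eight.
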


\section{Apex Exclusions}

The main part of the proof of Theorem \ref{main_apex} is the following reduction lemma.

\begin{proposition}
\label{bound_k}
Let $\mathcal{G}$ be a hereditary class of graphs and let $c$ denote the maximum number of vertices in a forbidden induced subgraph for $\mathcal{G}$. 
If $G$ is a forbidden induced subgraph for $\Ga$, then there are distinct subsets $S$ and $T$  of $V(G)$  such that $G[S]$ and $G[T]$ induce a forbidden induced subgraph for $\mathcal{G}$. Moreover, if $|S \cap T| = k$ is minimal, then $|V(G)| \leq (c-k)(k+2)$.
\end{proposition}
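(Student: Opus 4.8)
The plan is to first produce two distinct forbidden subgraphs for $\mathcal{G}$ inside $G$, and then bound $|V(G)|$ by a counting argument driven by the minimality of $k$. For the existence: since $G$ is a forbidden induced subgraph for $\Ga$ we have $G \notin \Ga$, which by definition means $G-v \notin \mathcal{G}$ for \emph{every} vertex $v$; as $\mathcal{G}$ is hereditary, each $G-v$ then contains a forbidden induced subgraph for $\mathcal{G}$. I would fix a vertex $v_1$ and let $G[S]$ be a forbidden subgraph contained in $G-v_1$, so that $v_1 \notin S$. I would then pick $v_2 \in S$ (possible since a forbidden subgraph is nonempty) and let $G[T]$ be a forbidden subgraph contained in $G-v_2$, so that $v_2 \notin T$. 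Because $v_2 \in S \setminus T$, the sets $S$ and $T$ are distinct, so the family of distinct pairs of forbidden subgraphs is nonempty and a pair minimizing $|S\cap T|=k$ exists; I fix such a pair.

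For the bound, set $R = V(G)\setminus(S\cup T)$. Since $|S|,|T|\le c$, we have $|S\cup T|\le 2c-k$, and a short calculation shows $(2c-k)+k(c-k-1)=(c-k)(k+2)$, so it suffices to prove $|R|\le k(c-k-1)$. The key structural observation I would establish is that every $w\in R$ admits an apex $u\in S\cap T$ with $G-w-u\in\mathcal{G}$. Indeed, minimality of $G$ gives $G-w\in\Ga$, so some $u$ satisfies $G-w-u\in\mathcal{G}$; since $w\notin S$ and $w\notin T$, both $G[S]$ and $G[T]$ are induced subgraphs of $G-w$, and because $\mathcal{G}$ is hereditary the vertex $u$ must destroy both, forcing $u\in S$ and $u\in T$, i.e.\ $u\in S\cap T$.

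For each $u\in S\cap T$ let $W_u=\{\,w\in R : G-w-u\in\mathcal{G}\,\}$; the observation gives $R=\bigcup_{u\in S\cap T}W_u$, so it is enough to prove $|W_u|\le c-k-1$ for each of the $k$ vertices $u$. Fix $u$. Since $G\notin\Ga$ we have $G-u\notin\mathcal{G}$, so $G-u$ contains a forbidden subgraph $F$, which is also a forbidden subgraph of $G$. For each $w\in W_u$ we have $(G-u)-w\in\mathcal{G}$, so $F$ must contain $w$; hence $W_u\subseteq V(F)$. Now minimality of $k$ enters through the pairs $(F,S)$ and $(F,T)$: because $u\in S\cap T$ while $u\notin V(F)$, the set $V(F)$ differs from both $S$ and $T$, so $|V(F)\cap S|\ge k$ and $|V(F)\cap T|\ge k$, whereas $|V(F)\cap S\cap T|\le k-1$ since $u$ is excluded. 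Inclusion--exclusion then yields $|V(F)\cap(S\cup T)|\ge k+1$. As $W_u\subseteq R$ is disjoint from $S\cup T$, the sets $W_u$ and $V(F)\cap(S\cup T)$ are disjoint subsets of $V(F)$, giving $|W_u|\le |V(F)|-(k+1)\le c-k-1$, as required.

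I expect the main obstacle to be recognizing that the minimality of $k$ is the right tool and that it must be invoked for the pairs $(F,S)$ and $(F,T)$ rather than for $(S,T)$ itself; the crucial gain comes from the fact that $u\in (S\cap T)\setminus V(F)$ trims one vertex off the overlap, turning the two separate bounds into $|V(F)\cap(S\cup T)|\ge k+1$. The remaining points are routine bookkeeping that I would nonetheless verify: that forbidden subgraphs are nonempty so the choice $v_2\in S$ is legitimate, that $u\ne w$ (automatic since $u\in S\cup T$ and $w\in R$), and that $F$, arising in $G-u$, is genuinely a forbidden subgraph of $G$ distinct from $S$ and $T$.
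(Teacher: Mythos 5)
Your proof is correct and follows essentially the same route as the paper's: for each $u \in S \cap T$ you take a forbidden set $F$ avoiding $u$, invoke the minimality of $k$ on the pairs $(F,S)$ and $(F,T)$ to get $|V(F) \cap (S \cup T)| \ge k+1$, and then cover $V(G)$ by $S \cup T$ together with the at most $c-k-1$ remaining vertices of each such $F$, yielding $(2c-k)+k(c-k-1)=(c-k)(k+2)$. The only differences are organizational: the paper establishes the covering $V(G) = S \cup T \cup \bigcup_{i} F_i$ by a direct minimality contradiction and treats the case $S \cap T = \emptyset$ separately, whereas your $W_u$ formulation (the apex of $G-w$ must lie in $S \cap T$) handles $k=0$ uniformly.
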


\begin{proof}
Since $G$ is not in $\Ga$, there is no vertex $v$ of $G$ such that $G-v$ is in $\mathcal{G}$.  Thus, $G-v$ contains a forbidden induced subgraph for $\mathcal{G}$ for every vertex $v$ of $G$. It follows that   
$V(G)$ has distinct subsets $S$ and $T$ such that each of $G[S]$ and $G[T]$ induces a forbidden induced subgraph for $\mathcal{G}$. 

First we consider the possibility that $S \cap T$ is empty.  If it were empty, there would be no vertex in $V(G)-(S \cup T)$, because if there were such a vertex $v$, then $G-v$ would be a proper induced subgraph of $G$ but not in $\Ga$ as it would contain two vertex-disjoint forbidden induced subgraphs for $\mathcal{G}$. This is a contradiction to the minimality of $G$. It follows that $V(G) = S \cup T$, so $|V(G)| \leq 2c$, verifying the proposition. Therefore, we may assume $S\cap T$ is nonempty for every choice of $S$ and $T$.

We pick $S$ and $T$ such that $|S \cap T|$ is minimal and let $S \cap T = \{v_1, \ldots, v_k\}$. Since for each $1 \leq i \leq k$, the graph $G-v_i$ is not in $\mathcal{G}$, we have a subset $F_i$ of $V(G)-v_i$ such that $G[F_i]$ induces a forbidden induced subgraph for $\mathcal{G}$. Observe that, if there is a vertex $x$ in $V(G)- (S \cup T \cup \bigcup_{i=1}^{k} F_i )$, then $G-x$ is not in $\Ga$, a contradiction. Therefore, $V(G) = S \cup T \cup \bigcup_{i=1}^{k} F_i$.

Next we show that for every $1 \leq i \leq k$, we have $|F_i \cap (S \cup T)| \geq k+1$. By our choice of $S$ and $T$, we have $|F_i \cap S| \geq k$ and $|F_i \cap T| \geq k$. Since $F_i$ does not contain $v_i$, it follows that $F_i$ intersects both $S-T$ and $T-S$. Therefore $|F_i \cap (S \cup T)| \geq k+1$. Since $V(G) = S \cup T \cup \bigcup_{i=1}^{k} F_i$, we have $|V(G)| \leq c + (c-k) + k(c-(k+1)) = (c-k)(k+2)$.
\end{proof}

\begin{proof}[Proof of Theorem \ref{main_apex}]
By Proposition \ref{bound_k}, we have $|V(G)| \leq (c-k)(k+2)$ where $0 \leq k \leq c$. The upper bound attains its maximum value when $k = \frac{c-2}{2}$. Therefore, $|V(G)| \leq \frac14(c+2)^2$. 
\end{proof}

\section{Apex Cograph Exclusions}

In this section, the first lemma  observes that the forbidden induced subgraphs for apex cographs are closed under complementation. We omit its straightforward proof.

\begin{lemma}
\label{closed_complement}
If $G$ is a forbidden induced subgraph for the class of apex cographs, then so is its complement, $\overline{G}$.
\end{lemma}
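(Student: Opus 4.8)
The plan is to show first that the class of apex cographs is itself closed under complementation, and then to deduce the statement about forbidden induced subgraphs from this closure property together with the elementary fact that complementation commutes with vertex deletion and with passage to induced subgraphs. The whole argument is formal once the right commutation identities are in place, which is presumably why the authors call it straightforward.

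First I would record the two commutation identities: for any graph $G$, any vertex $v$, and any vertex subset $S$, we have $\overline{G-v} = \overline{G}-v$ and $\overline{G[S]} = \overline{G}[S]$. Both are immediate, since deleting a set of vertices and complementing act on independent data (which vertices survive, versus the adjacency relation among the survivors), so the two operations commute. Next I would observe that the class of cographs is closed under complementation; this follows either directly from the generating definition (complementation is one of the permitted operations) or from Theorem \ref{cographs_characterisation} together with the self-complementarity $\overline{P_4} \cong P_4$. Combining these, $G-v$ is a cograph if and only if $\overline{G}-v = \overline{G-v}$ is a cograph, so $G$ is an apex cograph with apex $v$ if and only if $\overline{G}$ is an apex cograph with the same apex $v$. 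In particular, the class of apex cographs is closed under complementation.

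Finally I would verify the two defining conditions for $\overline{G}$ to be a forbidden induced subgraph. Since $G$ is not an apex cograph, closure under complementation gives that $\overline{G}$ is not an apex cograph. For the minimality condition, any proper induced subgraph of $\overline{G}$ has the form $\overline{G}[S] = \overline{G[S]}$ for some proper subset $S$ of $V(G)$; here $G[S]$ is a proper induced subgraph of $G$, hence an apex cograph by hypothesis, so its complement $\overline{G}[S]$ is an apex cograph as well. Thus every proper induced subgraph of $\overline{G}$ is an apex cograph, and therefore $\overline{G}$ is a forbidden induced subgraph for the class of apex cographs.

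There is no genuine obstacle here; the entire content sits in the observation that apex cographs form a self-complementary class. The only points needing (minor) care are confirming the two commutation identities and the self-complementarity of the cograph class, after which the forbidden-subgraph statement follows purely formally. Indeed, the same argument establishes the general principle that whenever a hereditary class is closed under complementation, so is its set of forbidden induced subgraphs.
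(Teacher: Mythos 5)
Your proof is correct: the commutation of complementation with vertex deletion and induced subgraphs, the self-complementarity of cographs (hence of apex cographs), and the formal verification of the two defining conditions for a forbidden induced subgraph are exactly the ``straightforward proof'' the paper omits. Your closing remark that this works for any hereditary class closed under complementation is also right, and is implicitly what the authors rely on.
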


As for any given graph, the graph or its complement is connected, by Lemma \ref{closed_complement} we need to consider only connected graphs to find all forbidden induced subgraphs for apex cographs up to complementation.  Such forbidden graphs have at least five vertices, because any graph with fewer vertices is an apex cograph. We implemented the following algorithm on connected graphs using SageMath~\cite{sage}; in Figures \ref{six}--\ref{eight} we  list all such forbidden induced subgraphs up to complementation. The graphs were drawn using SageMath.

\begin{algorithm}
\renewcommand{\thealgorithm}{}
\label{pseudocode2}
\caption{Finding forbidden induced subgraphs for apex cographs up to complementation}

\begin{algorithmic}
\REQUIRE $n = 5,6,7,8,$ or $9$.
\STATE Set PreList $\leftarrow \emptyset$, FinalList $\leftarrow \emptyset$
\STATE Generate all connected graphs of order $n$ using nauty geng \cite{nauty} and store in an iterator $L$

\FOR{$g$ in $L$ such that $g$ is not a cograph}
 \STATE   Set $i \leftarrow 0$
    \FOR{$v$ in $V(g)$}
        \STATE $h = g - v$
        \IF{$h$ is not a cograph and $h-w$ is a cograph for some $w$ in $V(h)$}
            \STATE $i \leftarrow i+1$
        \ENDIF
    \ENDFOR

\IF{$i$ equals $|V(g)|$}

\STATE Add $g$ to PreList

\ENDIF

\ENDFOR

\FOR {$g$ in PreList}
    \IF{FinalList does not contain $\overline{g}$}
     \STATE add $g$ to FinalList
    
    \ENDIF
\ENDFOR

\end{algorithmic}
\end{algorithm}

The results were the following:

{\bf Graphs on five vertices.} The $5$-vertex cycle $C_5$ is the unique $5$-vertex forbidden induced subgraph for apex cographs.

{\bf Graphs on six vertices.} There are eight forbidden induced subgraphs for apex cographs on six vertices, namely, the $6$-vertex graphs shown in Figure~\ref{six} and their complements.

{\bf Graphs on seven vertices.} There are $62$ forbidden induced subgraphs for apex cographs on seven vertices, the graphs in Figure~\ref{seven} and their complements.

{\bf Graphs on eight vertices.} There are $75$ forbidden induced subgraphs for apex cographs on eight vertices, the graphs in Figure~\ref{eight} and their complements. Note that three out of these $75$ graphs are self-complementary.

\begin{figure}[htbp]
\centering
\begin{minipage}{.20\linewidth}
  \includegraphics[scale=0.30]{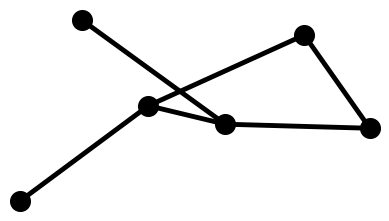}
\end{minipage} \hspace{.07\linewidth}
\begin{minipage}{.20\linewidth}
  \includegraphics[scale=0.25]{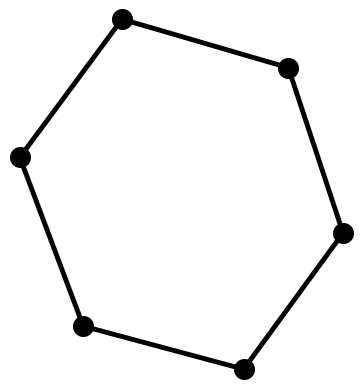}
\end{minipage} \hspace{.03\linewidth}
\begin{minipage}{.20\linewidth}
\includegraphics[scale=0.30]{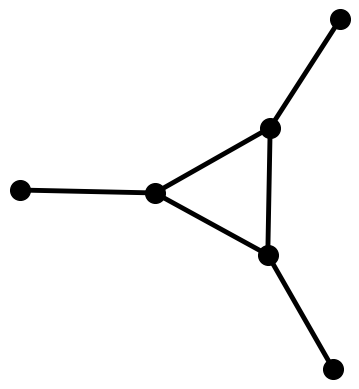}
\end{minipage}\hspace{.03\linewidth}
\begin{minipage}{.20\linewidth}
  \includegraphics[scale=0.25]{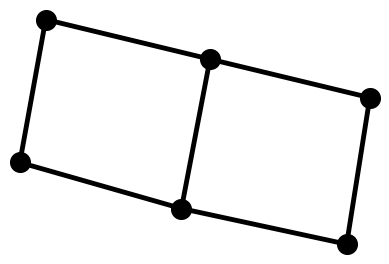}
\end{minipage}

\caption{These and their complements are the $6$-vertex forbidden induced subgraphs for apex cographs.}
\label{six}

\end{figure}

\begin{figure}[htbp]
\centering

\begin{minipage}{.17\linewidth}
  \includegraphics[scale=0.22]{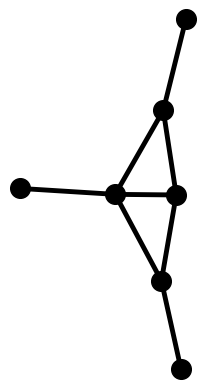}
\end{minipage} \hspace{.01\linewidth}
\begin{minipage}{.17\linewidth}
  \includegraphics[scale=0.22]{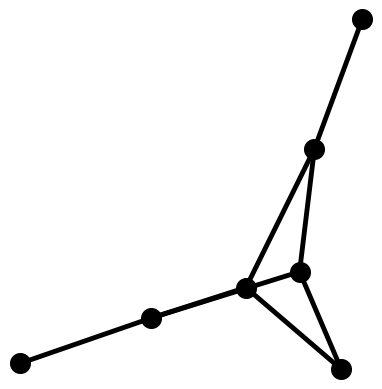}
\end{minipage}
\hspace{.01\linewidth}
\begin{minipage}{.17\linewidth}
\includegraphics[scale=0.22]{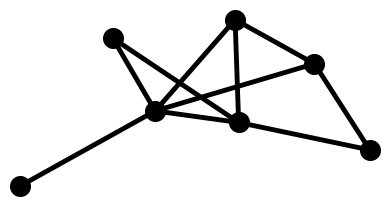}
\end{minipage}
\hspace{.01\linewidth}
\begin{minipage}{.17\linewidth}
\includegraphics[scale=0.22]{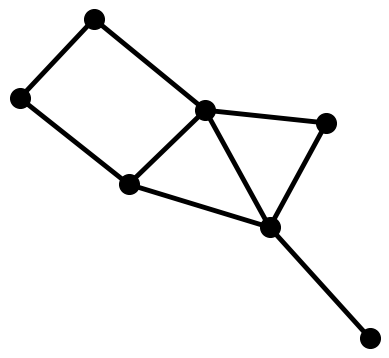}
\end{minipage}
\begin{minipage}{.17\linewidth}
  \includegraphics[scale=0.22]{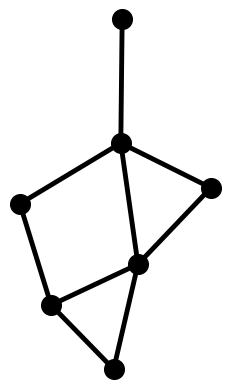}
\end{minipage}
\hspace{.01\linewidth}

\begin{minipage}{.17\linewidth}
  \includegraphics[scale=0.22]{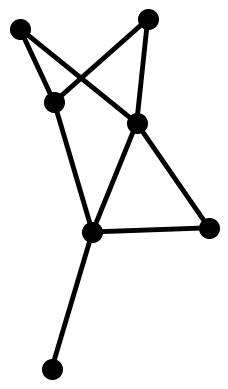}
\end{minipage} \hspace{.01\linewidth}
\begin{minipage}{.17\linewidth}
  \includegraphics[scale=0.22]{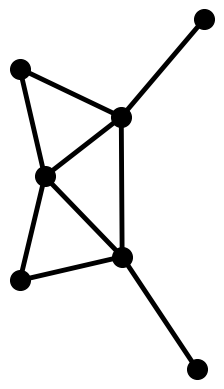}
\end{minipage}
\hspace{.01\linewidth}
\begin{minipage}{.17\linewidth}
\includegraphics[scale=0.22]{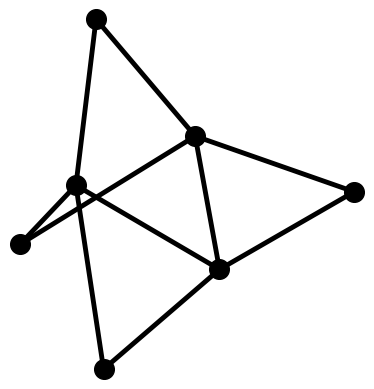}
\end{minipage}
\hspace{.01\linewidth}
\begin{minipage}{.17\linewidth}
\includegraphics[scale=0.22]{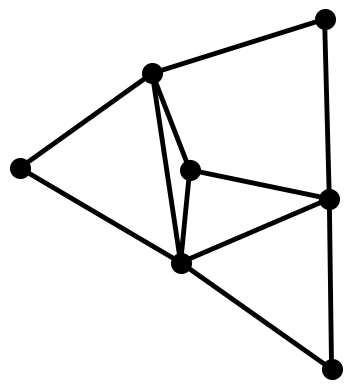}
\end{minipage}
\begin{minipage}{.17\linewidth}
  \includegraphics[scale=0.22]{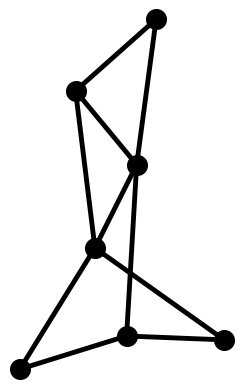}
\end{minipage}
\hspace{.01\linewidth}

\begin{minipage}{.17\linewidth}
  \includegraphics[scale=0.22]{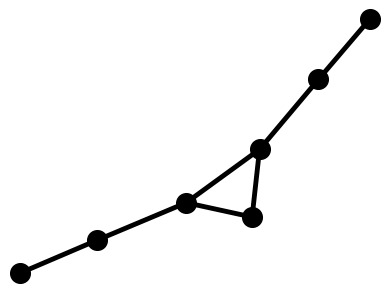}
\end{minipage} \hspace{.01\linewidth}
\begin{minipage}{.17\linewidth}
  \includegraphics[scale=0.22]{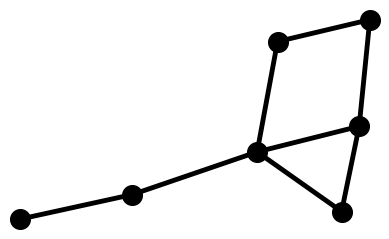}
\end{minipage}
\hspace{.01\linewidth}
\begin{minipage}{.17\linewidth}
\includegraphics[scale=0.22]{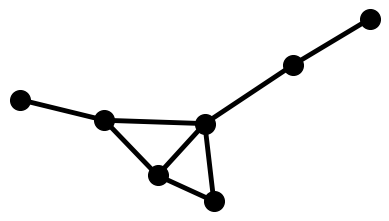}
\end{minipage}
\hspace{.01\linewidth}
\begin{minipage}{.17\linewidth}
\includegraphics[scale=0.22]{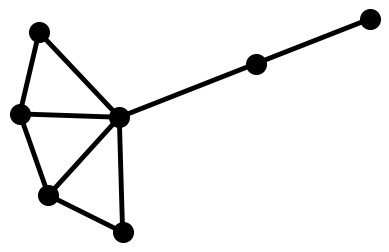}
\end{minipage}
\begin{minipage}{.17\linewidth}
  \includegraphics[scale=0.22]{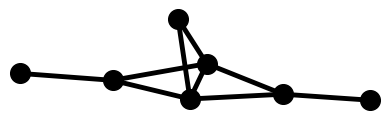}
\end{minipage}
\hspace{.01\linewidth}

\begin{minipage}{.17\linewidth}
  \includegraphics[scale=0.22]{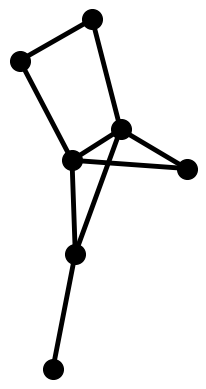}
\end{minipage} \hspace{.01\linewidth}
\begin{minipage}{.17\linewidth}
  \includegraphics[scale=0.22]{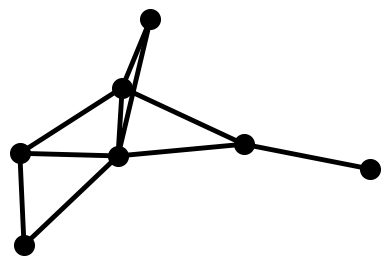}
\end{minipage}
\hspace{.01\linewidth}
\begin{minipage}{.17\linewidth}
\includegraphics[scale=0.22]{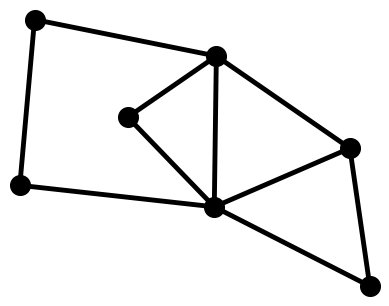}
\end{minipage}
\hspace{.01\linewidth}
\begin{minipage}{.17\linewidth}
\includegraphics[scale=0.22]{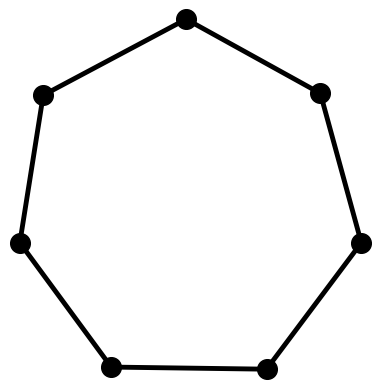}
\end{minipage}
\begin{minipage}{.17\linewidth}
  \includegraphics[scale=0.22]{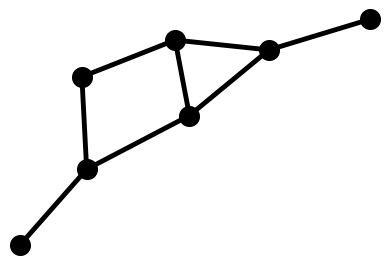}
\end{minipage}
\hspace{.01\linewidth}

\begin{minipage}{.17\linewidth}
  \includegraphics[scale=0.22]{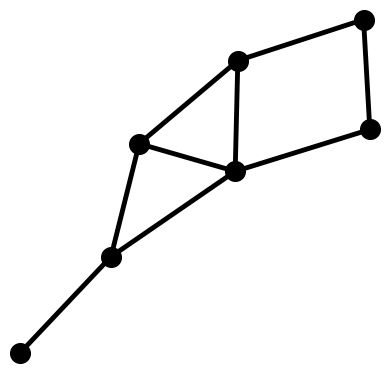}
\end{minipage} \hspace{.01\linewidth}
\begin{minipage}{.17\linewidth}
  \includegraphics[scale=0.22]{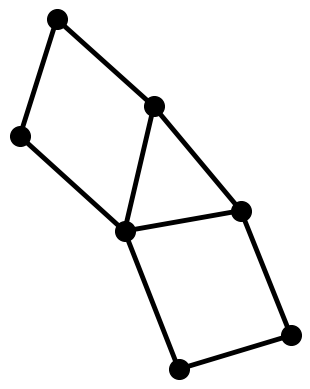}
\end{minipage}
\hspace{.01\linewidth}
\begin{minipage}{.17\linewidth}
\includegraphics[scale=0.22]{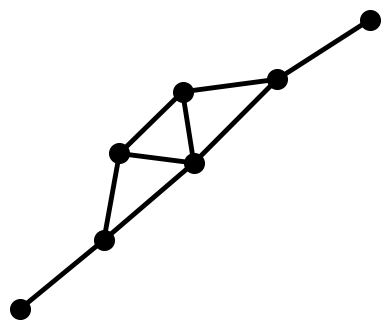}
\end{minipage}
\hspace{.01\linewidth}
\begin{minipage}{.17\linewidth}
\includegraphics[scale=0.22]{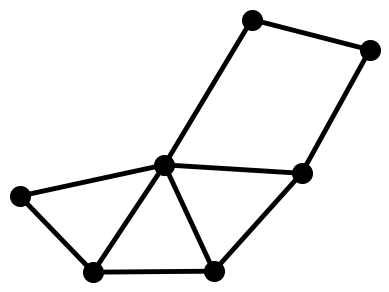}
\end{minipage}
\begin{minipage}{.17\linewidth}
  \includegraphics[scale=0.22]{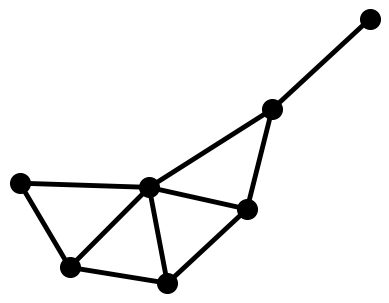}
\end{minipage}
\hspace{.01\linewidth}

\begin{minipage}{.17\linewidth}
  \includegraphics[scale=0.22]{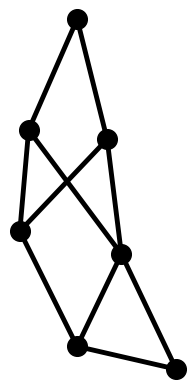}
\end{minipage} \hspace{.01\linewidth}
\begin{minipage}{.17\linewidth}
  \includegraphics[scale=0.22]{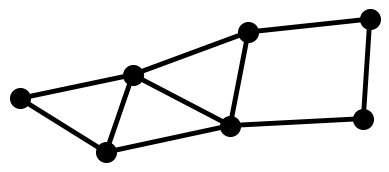}
\end{minipage}
\hspace{.01\linewidth}
\begin{minipage}{.17\linewidth}
\includegraphics[scale=0.22]{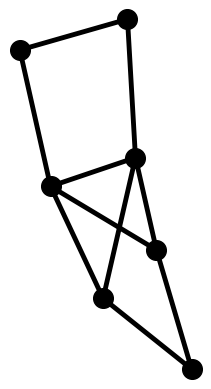}
\end{minipage}
\hspace{.01\linewidth}
\begin{minipage}{.17\linewidth}
\includegraphics[scale=0.22]{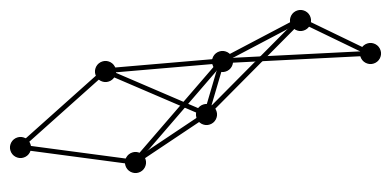}
\end{minipage}
\begin{minipage}{.17\linewidth}
  \includegraphics[scale=0.22]{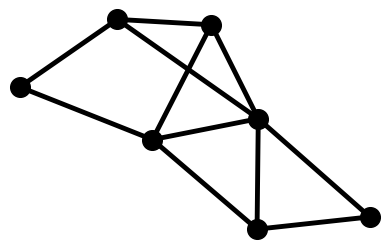}
\end{minipage}
\hspace{.01\linewidth}

\begin{minipage}{.17\linewidth}
  \includegraphics[scale=0.22]{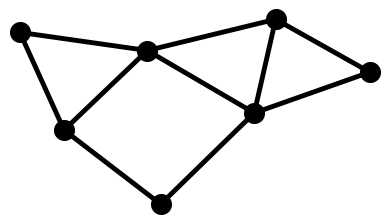}
\end{minipage} 

\caption{These and their complements are the $7$-vertex forbidden induced subgraphs for apex cographs.}

\label{seven}

\end{figure}

\begin{figure}[htbp]
\centering

\begin{minipage}{.17\linewidth}
  \includegraphics[scale=0.22]{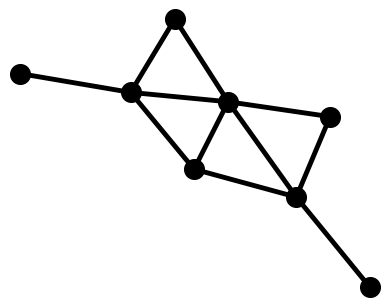}
\end{minipage} \hspace{.01\linewidth}
\begin{minipage}{.17\linewidth}
  \includegraphics[scale=0.22]{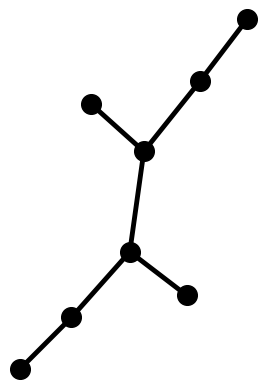}
\end{minipage}
\hspace{.01\linewidth}
\begin{minipage}{.17\linewidth}
\includegraphics[scale=0.22]{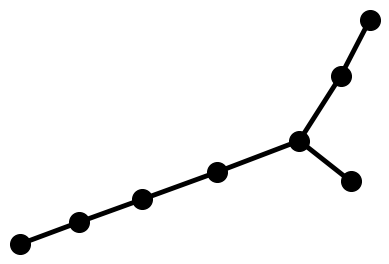}
\end{minipage}
\hspace{.01\linewidth}
\begin{minipage}{.17\linewidth}
\includegraphics[scale=0.22]{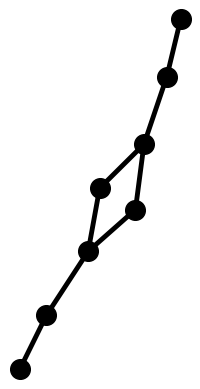}
\end{minipage}
\begin{minipage}{.17\linewidth}
  \includegraphics[scale=0.22]{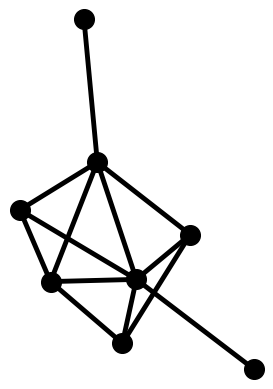}
\end{minipage}
\hspace{.01\linewidth}

\begin{minipage}{.17\linewidth}
  \includegraphics[scale=0.22]{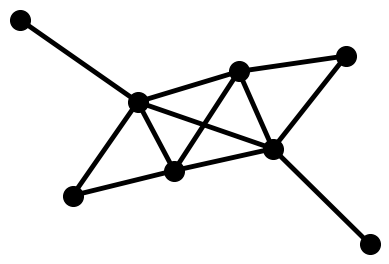}
\end{minipage} \hspace{.01\linewidth}
\begin{minipage}{.17\linewidth}
  \includegraphics[scale=0.22]{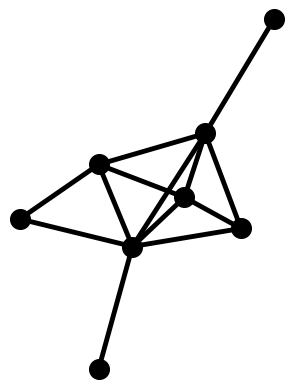}
\end{minipage}
\hspace{.01\linewidth}
\begin{minipage}{.17\linewidth}
\includegraphics[scale=0.22]{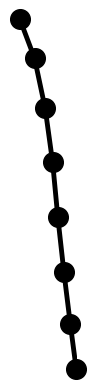}
\end{minipage}
\hspace{.01\linewidth}
\begin{minipage}{.17\linewidth}
\includegraphics[scale=0.22]{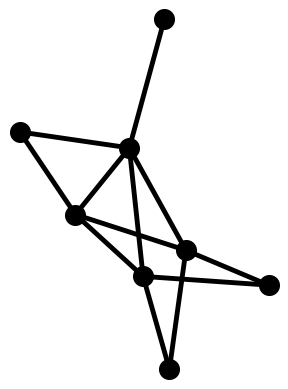}
\end{minipage}
\begin{minipage}{.17\linewidth}
  \includegraphics[scale=0.22]{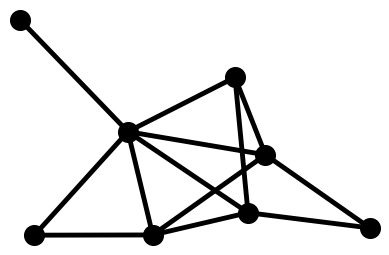}
\end{minipage}
\hspace{.01\linewidth}

\begin{minipage}{.17\linewidth}
  \includegraphics[scale=0.22]{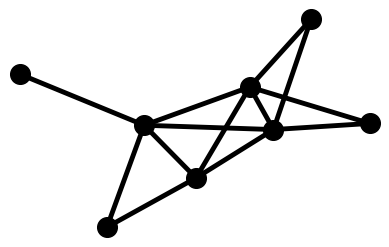}
\end{minipage} \hspace{.01\linewidth}
\begin{minipage}{.17\linewidth}
  \includegraphics[scale=0.22]{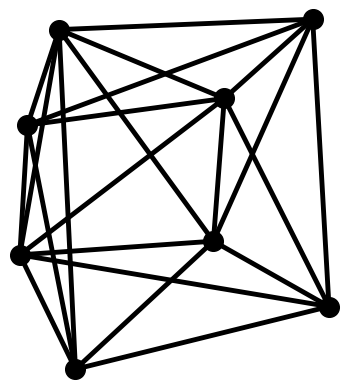}
\end{minipage}
\hspace{.01\linewidth}
\begin{minipage}{.17\linewidth}
\includegraphics[scale=0.22]{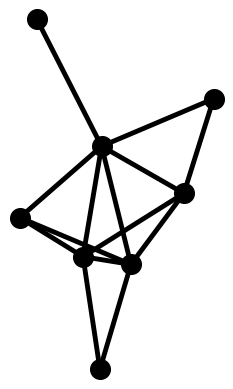}
\end{minipage}
\hspace{.01\linewidth}
\begin{minipage}{.17\linewidth}
\includegraphics[scale=0.22]{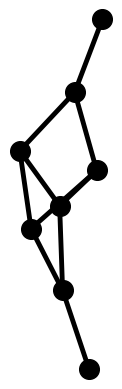}
\end{minipage}
\begin{minipage}{.17\linewidth}
  \includegraphics[scale=0.22]{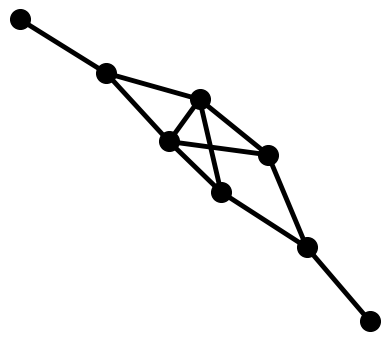}
\end{minipage}
\hspace{.01\linewidth}

\begin{minipage}{.17\linewidth}
  \includegraphics[scale=0.22]{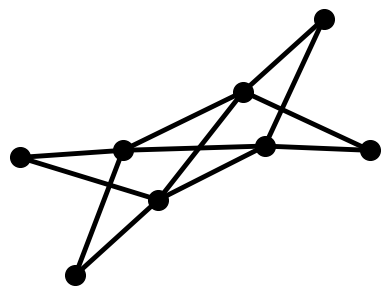}
\end{minipage} \hspace{.01\linewidth}
\begin{minipage}{.17\linewidth}
  \includegraphics[scale=0.22]{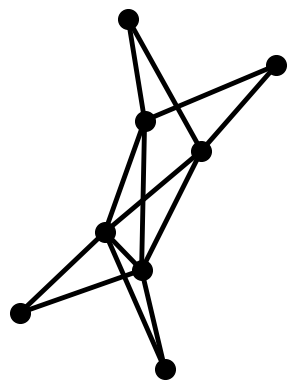}
\end{minipage}
\hspace{.01\linewidth}
\begin{minipage}{.17\linewidth}
\includegraphics[scale=0.22]{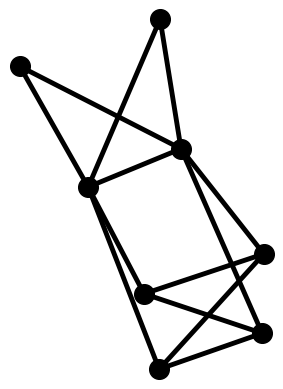}
\end{minipage}
\hspace{.01\linewidth}
\begin{minipage}{.17\linewidth}
\includegraphics[scale=0.22]{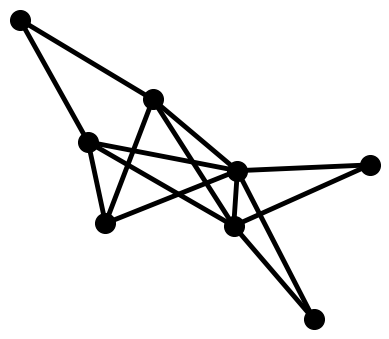}
\end{minipage}
\begin{minipage}{.17\linewidth}
  \includegraphics[scale=0.22]{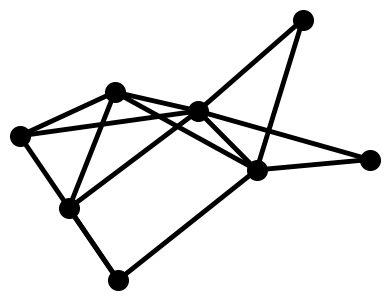}
\end{minipage}
\hspace{.01\linewidth}

\begin{minipage}{.17\linewidth}
  \includegraphics[scale=0.22]{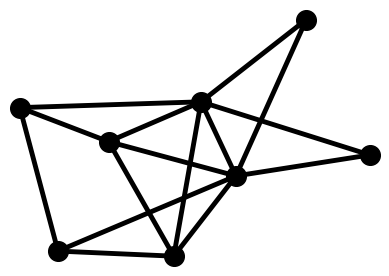}
\end{minipage} \hspace{.01\linewidth}
\begin{minipage}{.17\linewidth}
  \includegraphics[scale=0.22]{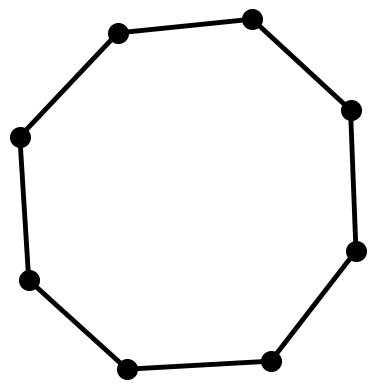}
\end{minipage}
\hspace{.01\linewidth}
\begin{minipage}{.17\linewidth}
\includegraphics[scale=0.22]{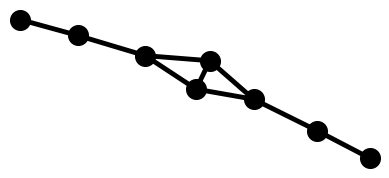}
\end{minipage}
\hspace{.01\linewidth}
\begin{minipage}{.17\linewidth}
\includegraphics[scale=0.22]{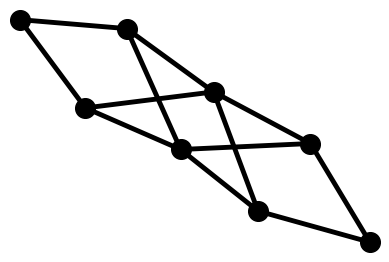}
\end{minipage}
\begin{minipage}{.17\linewidth}
  \includegraphics[scale=0.22]{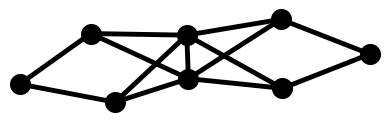}
\end{minipage}
\hspace{.01\linewidth}

\begin{minipage}{.17\linewidth}
  \includegraphics[scale=0.22]{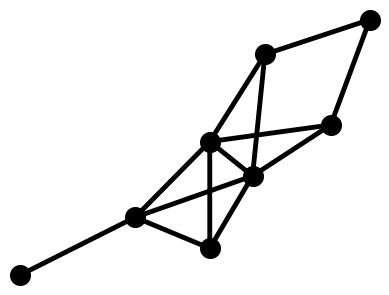}
\end{minipage} \hspace{.01\linewidth}
\begin{minipage}{.17\linewidth}
  \includegraphics[scale=0.22]{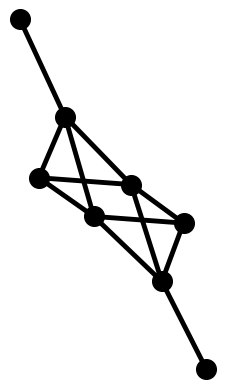}
\end{minipage}
\hspace{.01\linewidth}
\begin{minipage}{.17\linewidth}
\includegraphics[scale=0.22]{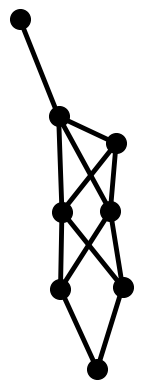}
\end{minipage}
\hspace{.01\linewidth}
\begin{minipage}{.17\linewidth}
\includegraphics[scale=0.22]{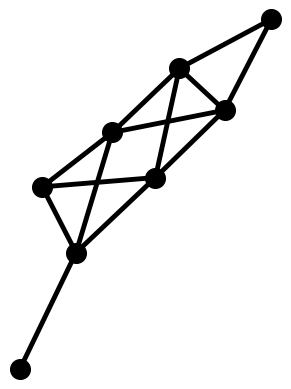}
\end{minipage}
\begin{minipage}{.17\linewidth}
  \includegraphics[scale=0.22]{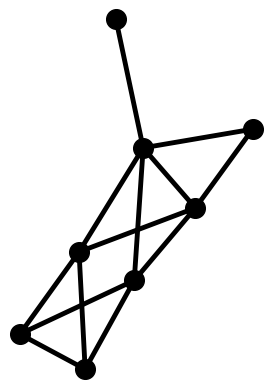}
\end{minipage}
\hspace{.01\linewidth}

\begin{minipage}{.17\linewidth}
  \includegraphics[scale=0.22]{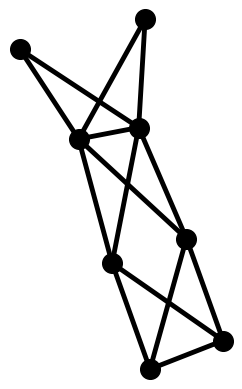}
\end{minipage} \hspace{.01\linewidth}
\begin{minipage}{.17\linewidth}
  \includegraphics[scale=0.22]{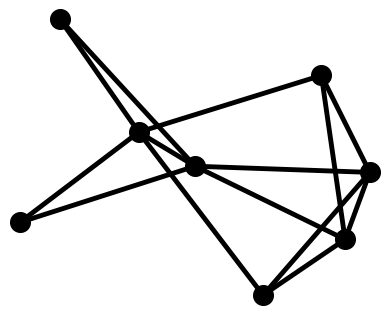}
\end{minipage}
\hspace{.01\linewidth}
\begin{minipage}{.17\linewidth}
\includegraphics[scale=0.22]{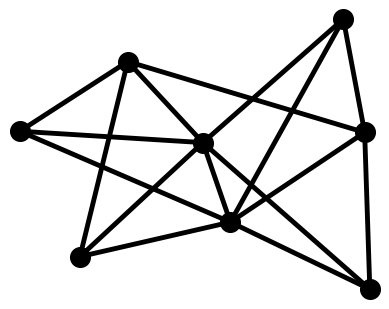}
\end{minipage}
\hspace{.01\linewidth}
\begin{minipage}{.17\linewidth}
\includegraphics[scale=0.22]{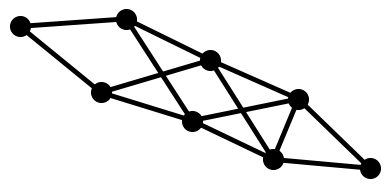}
\end{minipage}
\begin{minipage}{.17\linewidth}
  \includegraphics[scale=0.22]{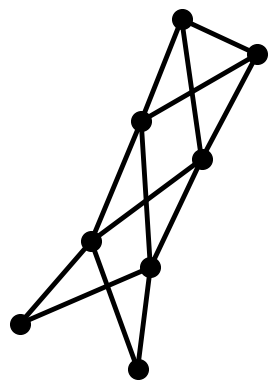}
\end{minipage}
\hspace{.01\linewidth}

\begin{minipage}{.17\linewidth}
  \includegraphics[scale=0.22]{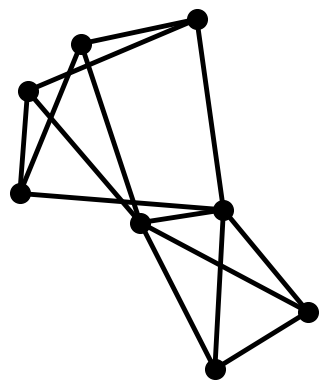}
\end{minipage} \hspace{.01\linewidth}
\begin{minipage}{.17\linewidth}
  \includegraphics[scale=0.22]{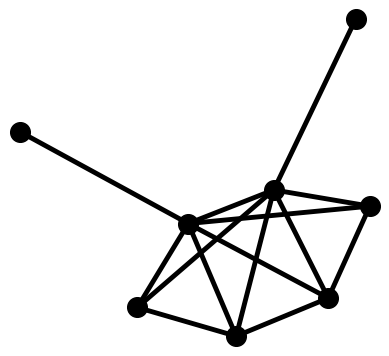}
\end{minipage}
\hspace{.01\linewidth}
\begin{minipage}{.17\linewidth}
\includegraphics[scale=0.22]{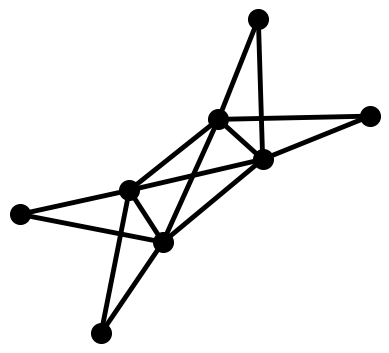}
\end{minipage}
\hspace{.01\linewidth}
\begin{minipage}{.17\linewidth}
\includegraphics[scale=0.22]{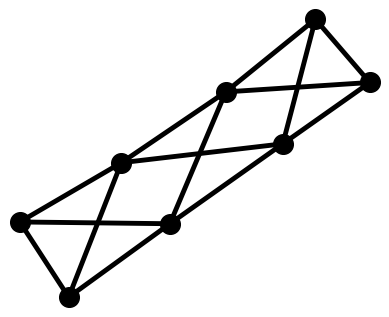}
\end{minipage}
\hspace{.01\linewidth}

\caption{These and their complements are the $8$-vertex forbidden induced subgraphs for apex cographs.}

\label{eight}

\end{figure}

{\bf Graphs on nine vertices.} The algorithm found no 9-vertex forbidden induced subgraphs.  However, we felt it unsatisfactory to rely on a computer program for such a significant fact, so we produced a readable proof by analyzing a possible $9$-vertex forbidden induced subgraph for apex cographs, and ruling out that possibility. That proof occupies the rest of the section.  Henceforth, $G$ denotes a $9$-vertex forbidden induced subgraph for apex cographs.

\begin{lemma}
\label{9_vtx_one}
$V(G)$ has three distinct subsets $S,T,$ and $F$ such that each of $G[S]$, $G[T],$ and $G[F]$ induces a path $P_4$, and $|S \cap T| = |T \cap F| = |S \cap F| =1$.    
\end{lemma}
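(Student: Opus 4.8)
The plan is to specialize Proposition \ref{bound_k} to the class of cographs and then exploit the fact that the bound there is forced to be tight when $|V(G)| = 9$. Since the only forbidden induced subgraph for cographs is $P_4$ (Theorem \ref{cographs_characterisation}), we have $c = 4$, and any subset of $V(G)$ inducing a forbidden induced subgraph for cographs induces precisely a $P_4$. Applying Proposition \ref{bound_k} to the apex-cograph forbidden subgraph $G$, we obtain distinct subsets $S$ and $T$ with $G[S] \cong G[T] \cong P_4$ and $|S \cap T| = k$ minimal, subject to $9 = |V(G)| \leq (4-k)(k+2)$.

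First I would pin down $k$. Since $|S| = |T| = 4$ and $S \neq T$, we have $0 \leq k \leq 3$, and the quantity $(4-k)(k+2)$ takes the values $8, 9, 8, 5$ for $k = 0, 1, 2, 3$ respectively. Moreover $k = 0$ is impossible, since an empty intersection forces $V(G) = S \cup T$ and hence $|V(G)| \leq 2c = 8$, as in the first part of the proof of Proposition \ref{bound_k}. Thus $9 \leq (4-k)(k+2)$ leaves only $k = 1$, so the bound holds with equality; write $S \cap T = \{v\}$.

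Next I would produce the third set $F$. Because $G$ is a forbidden induced subgraph for apex cographs, $G$ is itself not an apex cograph, so $G - v$ is not a cograph and therefore contains an induced $P_4$ on some vertex set $F \subseteq V(G) - v$. I would then establish $V(G) = S \cup T \cup F$ by the deletion argument specializing Proposition \ref{bound_k}: if some $x$ lay outside $S \cup T \cup F$, then in $G - x$ both $G[S]$ and $G[T]$ remain induced $P_4$'s, so any vertex $u$ with $(G-x)-u$ a cograph would have to lie in $S \cap T = \{v\}$; but $(G-x)-v$ still contains the induced $P_4$ on $F$ (as $v, x \notin F$), so no such $u$ exists and $G - x$ is not an apex cograph, contradicting the minimality of $G$.

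Finally I would read off the intersection sizes from the tight count. Since $|S \cup T| = 7$ and $|S \cup T \cup F| = 9$ with $|F| = 4$, exactly two vertices of $F$ lie in $S \cup T$. By the minimality of $k = 1$, both $F \cap S$ and $F \cap T$ are nonempty (here $F \neq S, T$ because $v \notin F$); as $v \notin F$, these sets are contained in the disjoint sets $S - T$ and $T - S$, so their sizes sum to at most $2$ and hence each is exactly $1$. This yields $|S \cap T| = |T \cap F| = |S \cap F| = 1$ with $S, T, F$ distinct, as required. I expect the only step needing genuine care to be the deletion argument showing $V(G) = S \cup T \cup F$; the rest is bookkeeping forced by the equality in the bound.
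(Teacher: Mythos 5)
Your proof is correct and follows essentially the same route as the paper's: apply Proposition \ref{bound_k} with $c=4$ to force $|S\cap T|=1$, obtain $F$ from the fact that $G-\alpha$ is not a cograph, establish $V(G)=S\cup T\cup F$, and read off $|S\cap F|=|T\cap F|=1$ by counting together with the minimality of $k$. The paper's proof is a terser version of the same argument; the details you spell out (ruling out $k\neq 1$, the deletion argument for $V(G)=S\cup T\cup F$, and the nonemptiness of $F\cap S$ and $F\cap T$) are precisely the steps the paper leaves implicit.
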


\begin{proof}
By Proposition \ref{bound_k}, $G$ has distinct subsets $S$ and $T$ such that both $G[S]$ and $G[T]$ induce the path $P_4$, and $|S \cap T| = 1$. Let $S \cap T = \{\alpha \}$. Since $G - \alpha$ is not a cograph, there is a subset $F$ of $V(G) - \alpha$ such that $G[F] \cong P_4$. Since $V(G) = S \cup T \cup F$, we have $|T \cap F| = |S \cap F| = 1$.
\end{proof}

Note that $|S \cap T \cap F| = 0$, otherwise $|V(G)| > 9$. 

In the rest of the section we let $S = \{\alpha, s_1, s_2, \beta\}$, $T = \{\alpha, t_1, t_2, \gamma\},$ and $F = \{\beta, f_1, f_2, \gamma\}$ denote the subsets of $V(G)$  as in Lemma \ref{9_vtx_one}. Thus, $V(G) = \{\alpha, \beta, \gamma, s_1, s_2, t_1, t_2, f_1, f_2\}$.

\begin{lemma}
\label{9_vtx_two}
Let $\{x,y\} = \{\alpha, f_1\}, \{\alpha, f_2\}$, $\{\beta, t_1\}, \{\beta, t_2\}, \{\gamma, s_1\}$, or $\{\gamma, s_2\}$. Then $G - \{x,y\}$ is a cograph.
\end{lemma}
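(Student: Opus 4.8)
The plan is to exploit the defining property of a forbidden induced subgraph. Since $G$ is a forbidden induced subgraph for apex cographs, $G$ itself is not an apex cograph, but every \emph{proper} induced subgraph of $G$ is an apex cograph. In particular, for each vertex $w$, the graph $G-w$ is an apex cograph, so there is a vertex $u \in V(G)-w$ with $G-\{w,u\}$ a cograph, that is, $P_4$-free by Theorem \ref{cographs_characterisation}. My proof will extract, in each of the six cases, the identity of this apex vertex $u$.

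First I would treat the representative case $\{x,y\} = \{\alpha,f_1\}$ and delete the \emph{private} vertex $y = f_1$ rather than $x$. The key observation is that $f_1$ belongs only to $F$: since $|S\cap T\cap F| = 0$ and $f_1$ is one of the private labels, we have $f_1 \notin S$ and $f_1 \notin T$. Consequently both $G[S]$ and $G[T]$, each an induced $P_4$, survive intact in $G-f_1$. Thus $G-f_1$ is not a cograph, and any apex vertex $u$ witnessing that $G-f_1$ is an apex cograph must simultaneously destroy both of these induced paths.

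The main step is the forcing of $u$, and it is short. An induced $P_4$ on a vertex set $A$ is destroyed by deleting $u$ precisely when $u\in A$; otherwise the $P_4$ persists. Hence a single vertex $u$ whose removal kills both the $S$-path and the $T$-path must lie in $S\cap T$. By Lemma \ref{9_vtx_one}, $S\cap T = \{\alpha\}$, so $u = \alpha$ is forced, and therefore $G-\{f_1,\alpha\} = G-\{\alpha,f_1\}$ is a cograph.

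Finally, the remaining five cases follow by the same argument with the three paths permuted, and I would organize the six pairs into three symmetric types according to which path the private vertex inhabits: deleting $f_1$ or $f_2$ leaves the paths on $S$ and $T$ and forces $\alpha = S\cap T$; deleting $t_1$ or $t_2$ leaves the paths on $S$ and $F$ and forces $\beta = S\cap F$; deleting $s_1$ or $s_2$ leaves the paths on $T$ and $F$ and forces $\gamma = T\cap F$. I do not anticipate a genuine obstacle here. The only point needing care is verifying that the deleted private vertex avoids both surviving paths, which is immediate from $|S\cap T\cap F| = 0$ together with the explicit labeling of $V(G)$; the entire content lies in recognizing that the apex vertex is pinned to the unique common vertex of two untouched copies of $P_4$.
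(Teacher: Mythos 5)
Your proof is correct and follows essentially the same route as the paper: delete the private vertex (e.g.\ $f_i$), note that $G-f_i$ is an apex cograph whose two surviving induced copies of $P_4$ (on $S$ and $T$) must both be hit by the apex vertex, which pins that vertex to the unique intersection point $\alpha$, with the other two types of pairs handled by the symmetric relabeling. No gaps; this matches the paper's argument.
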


\begin{proof}
For $i=1,2$, the graph $G-f_i$ is an apex cograph, so it has a vertex $z$ in $V(G)-f_i$ such that $G-\{f_i, z\}$ is a cograph. We must have $z = \alpha$; otherwise $V(G) - \{f_i,z\}$ would contain $S$ or $T$, thus an induced $P_4$, a contradiction. Replacing $f_i$ with $t_i$, our result follows for $\{\beta, t_i\}$, and similarly for $\{\gamma, s_i\}$.
\end{proof}

For two $P_4$'s that have one common vertex, there are three distinct cases based on how they intersect with each other. The cases are shown in Figure \ref{L_X_T}; we call them the $L$-case, $T$-case, and $X$-case as in the figure.

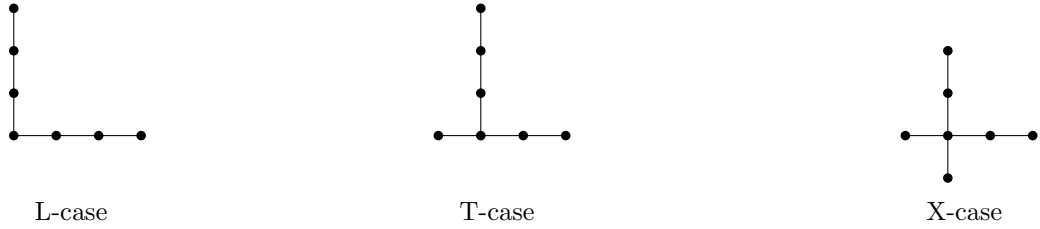
\begin{figure}[htbp]
\centering

\begin{tikzpicture}[scale=0.4,colorstyle/.style={circle, draw=black!100,fill=black!100, thick, inner sep=1.25pt, minimum size=0.5mm}]

    \node (1) at (-25,1)[colorstyle]{};
    \node (2) at (-25,2.5)[colorstyle]{};
    \node (3) at (-25,4)[colorstyle]{};
    \node (4) at (-25,5.5)[colorstyle]{};

    \node (b) at (-23.5,1)[colorstyle]{};
    \node (c) at (-22,1)[colorstyle]{};
    \node (d) at (-20.5,1)[colorstyle]{};

    \draw[thick] (1)--(2)--(3)--(4);
    \draw[thick] (1)--(b)--(c)--(d);

    \node (5) at (-14,1)[colorstyle]{};
    \node (6) at (-14,2.5)[colorstyle]{};
    \node (7) at (-14,4)[colorstyle]{};
    \node (8) at (-14,5.5)[colorstyle]{};

    \node (e) at (-15.5,1)[colorstyle]{};
    \node (f) at (-12.5,1)[colorstyle]{};
    \node (g) at (-11,1)[colorstyle]{};

    \draw[thick] (5)--(6)--(7)--(8);
    \draw[thick] (e)--(5)--(f)--(g);

   \node (9) at (-4,1)[colorstyle]{};
    \node (10) at (-4,2.5)[colorstyle]{};
    \node (11) at (-4,4)[colorstyle]{};
    \node (12) at (-4,5.5)[colorstyle]{};

    \node (h) at (-5.5,2.5)[colorstyle]{};
    \node (i) at (-2.5,2.5)[colorstyle]{};
    \node (j) at (-1,2.5)[colorstyle]{};

    \draw[thick] (9)--(10)--(11)--(12);
    \draw[thick] (h)--(10)--(i)--(j);

\node (n1) at (-23,-0.5){L-case};

\node (n1) at (-13.5,-0.5){T-case};

\node (n1) at (-4,-0.5){X-case};

\end{tikzpicture}

\caption{The intersection cases of two $P_4$'s.}
\label{L_X_T}

\end{figure}

\begin{lemma}
\label{9_vtx_three}
Let $S$ and $T$ be distinct subsets of $V(G)$ as in Lemma \ref{9_vtx_one}. If the intersection of $G[S]$ and $G[T]$ is an $L$-case, then for $i=1,2$ the graph $G-\{\alpha, f_i\}$ is connected.
\end{lemma}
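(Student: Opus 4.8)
The plan is to exploit the path structure created by the $L$-case together with the cograph conditions already recorded in Lemma~\ref{9_vtx_two}. First I would normalise the labels. In the $L$-case the shared vertex $\alpha$ is an endpoint of both induced paths (this is what distinguishes the $L$-case from the $T$- and $X$-cases of Figure~\ref{L_X_T}, where $\alpha$ has degree $3$ and $4$ respectively in the union), so, after possibly swapping $s_1\leftrightarrow s_2$ and $t_1\leftrightarrow t_2$, I may assume $G[S]$ is the path $\alpha\,s_1\,s_2\,\beta$ and $G[T]$ is the path $\alpha\,t_1\,t_2\,\gamma$. In particular $\alpha$'s only neighbours inside $S\cup T$ are $s_1$ and $t_1$, and after deleting $\alpha$ the triples $A=\{s_1,s_2,\beta\}$ and $B=\{t_1,t_2,\gamma\}$ each induce a $P_3$; hence in $H_i:=G-\{\alpha,f_i\}$ the set $A$ lies in one component and $B$ lies in one component.

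Since $V(G)=\{\alpha,\beta,\gamma,s_1,s_2,t_1,t_2,f_1,f_2\}$, the graph $H_i$ has the seven vertices $A\cup B\cup\{f_{3-i}\}$, so to prove $H_i$ connected it suffices to show that $A$, $B$, and the leftover vertex $f_{3-i}$ all lie in a common component. The natural bridge between the two sides is the third path $G[F]$ on $\{\beta,f_1,f_2,\gamma\}$, which joins $\beta\in A$ to $\gamma\in B$, so I would split on the position of $f_i$ along this path. If $f_i$ is an endpoint of $G[F]$, then $G[F]-f_i=G[\{\beta,f_{3-i},\gamma\}]$ is a connected $P_3$ joining $\beta$ to $\gamma$ through $f_{3-i}$; this immediately merges $A$, $B$, and $f_{3-i}$, and $H_i$ is connected.

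The remaining, and genuinely harder, case is when $f_i$ is an interior vertex of $G[F]$, so that $G[F]-f_i$ is disconnected (an edge together with an isolated vertex) and the path $F$ alone no longer links the two sides. Here I would argue by contradiction. If $H_i$ is disconnected, then its separation must either place $A$ and $B$ in different components or cut off $f_{3-i}$ as a component of its own; in the first case $G$ has \emph{no} edge between $A$ and $B$ (such an edge would avoid $\alpha$ and $f_i$ and so survive in $H_i$), and in the second case $f_{3-i}$ is non-adjacent to all of $A\cup B$. Either conclusion is a strong non-adjacency constraint, which I would then play against the cograph requirements of Lemma~\ref{9_vtx_two}: $G-\{\gamma,s_1\}$, $G-\{\gamma,s_2\}$, $G-\{\beta,t_1\}$, $G-\{\beta,t_2\}$, and $H_i$ itself are all cographs, hence $P_4$-free. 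Using $P_4$-freeness of these two-vertex deletions to locate the neighbours of $f_{3-i}$, $\beta$, and $\gamma$ in $A\cup B$, I aim to produce a single vertex $w$ with $G-w$ a cograph, contradicting that $G$ is a forbidden induced subgraph for apex cographs.

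The main obstacle is precisely this interior sub-case: the three defining paths $S$, $T$, $F$ do not by themselves connect $G-\{\alpha,f_i\}$, so connectivity must be extracted from the minimality of $G$ rather than read off from the paths. Pinning down the forced adjacencies of $f_{3-i}$ and of the endpoints $\beta,\gamma$ to $A\cup B$, and converting the hypothetical separation of $A$ from $B$ (or the isolation of $f_{3-i}$) into a clean contradiction, is where the real work lies; everything else is bookkeeping with the $L$-case labelling.
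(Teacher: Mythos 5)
Your proposal is not a proof: it contains one outright error and leaves the genuinely hard part as a statement of intent. The error is the opening normalization. You cannot assume $G[S]$ is the path $\alpha\,s_1\,s_2\,\beta$ and $G[T]$ is $\alpha\,t_1\,t_2\,\gamma$: the vertices $\beta$ and $\gamma$ are not free labels but are pinned down as the unique vertices of $S\cap F$ and $T\cap F$, so the only symmetry available is swapping $s_1\leftrightarrow s_2$ (resp.\ $t_1\leftrightarrow t_2$), which cannot move $\beta$ or $\gamma$ along their paths. The $L$-case only says that $\alpha$ is an endpoint of both paths; $\beta$ may be $\alpha$'s neighbour, the middle vertex, or the far end, and likewise for $\gamma$. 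This is not a removable technicality. In the paper's proof, in the sub-case where there is no edge between $S\setminus\{\alpha\}$ and $T\setminus\{\alpha\}$, Lemma \ref{9_vtx_two} \emph{forces} $\beta$ and $\gamma$ to be the neighbours of $\alpha$ on their respective paths --- exactly the configuration your normalization excludes by fiat. Any case analysis carried out under your assumption therefore misses the cases that actually arise.

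The second gap is that the case you yourself identify as ``where the real work lies'' is never done. Your reduction is fine as far as it goes: $A=S\setminus\{\alpha\}$ and $B=T\setminus\{\alpha\}$ each induce a connected $P_3$ in $H_i=G-\{\alpha,f_i\}$; if $f_i$ is an endpoint of $G[F]$ then $\{\beta,\gamma,f_{3-i}\}$ induces a connected $P_3$ and $H_i$ is connected; and if $H_i$ is disconnected then either there is no edge between $A$ and $B$, or $f_{3-i}$ has no neighbour in $A\cup B$. But converting either branch of that dichotomy into a contradiction is the entire substance of the lemma: in the paper it takes two long paragraphs of locating $P_4$'s that survive in the deletions $G-\{\gamma,s\}$, $G-\{\beta,t\}$, and $G-\{\alpha,f_j\}$ of Lemma \ref{9_vtx_two}, forcing adjacencies one at a time ($f_i$ adjacent to all of $S\setminus\{\alpha\}$, then $f_j\alpha$ a non-edge, then $f_i\alpha$ an edge, and so on) until a forbidden $P_4$ appears. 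Saying ``I aim to produce a single vertex $w$ with $G-w$ a cograph'' is a plan, not an argument; and since those adjacency-forcing steps hinge on where $\beta$ and $\gamma$ sit on their paths, the flawed normalization would derail this part even if you attempted to carry it out.
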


\begin{proof}
We denote the intersection of $G[S]$ and $G[T]$ as shown in Figure \ref{S_T_intersect}, where $\{1,2,3\} = \{s_1,s_2, \beta\}$ and $\{4,5,6\} = \{t_1,t_2,\gamma\}$. First suppose that there is no edge between $\{1,2,3\}$ and $\{4,5,6\}$ in $G$, so $1\alpha45$ and $4\alpha12$ are $P_4$'s in $G$. By Lemma \ref{9_vtx_two}, it follows that $1=\beta$ or $6=\gamma$, and $4= \gamma$ or $3=\beta$. Note that if $6=\gamma$, then $1\alpha45$ is a $P_4$ in $G- \{\gamma, s\}$ for $s=s_1$ or $s_2$, a contradiction to Lemma \ref{9_vtx_two}. Similarly, if $3=\beta$, then $4\alpha12$ is a $P_4$ in $G-\{\beta,t\}$ for $t=t_1$ or $t_2$ contradicting Lemma \ref{9_vtx_two}. Therefore $1=\beta$ and $4=\gamma$. 

For $\{i,j\}=\{1,2\}$, observe that if $f_j$ has neighbours in both $\{1, 2, 3\}$ and $\{4,5,6\}$ in $G$, then $G-\{\alpha,f_i\}$ is connected and our result follows. Since $\gamma \beta$ is not an edge in $G$ and $f_j$ has exactly one neighbour in $\{\beta, \gamma\}$ , we may assume that the $P_4$ induced on $\{\beta, \gamma, f_i, f_j\}$ is $\beta f_if_j \gamma$ or $\beta f_i \gamma f_j$. 
If $\beta f_if_j \gamma$ is a $P_4$ in $G$, then $f_i2$ is an edge in $G$; otherwise $f_j f_i \beta 2$ would be an induced $P_4$ in $G - \{\gamma, 3\}$.  Since $1=\beta$, $3$ is either $s_1$ or $s_2$, which is a contradiction to Lemma \ref{9_vtx_two}. Also $f_i3$ is an edge in $G$, else $f_j f_i 23$ is a $P_4$ in $G - \{\beta, t\}$ for $t=t_1$ or $t_2$. It follows that $\gamma f_j f_i 3$ is a $P_4$ in $G-\{\beta, t\}$ for $t=t_1$ or $t_2$, a contradiction to Lemma \ref{9_vtx_two}. Therefore $\beta f_i \gamma f_j$ is a $P_4$ in $G$. 
Observe that $f_i2$ is an edge in $G$, else $\gamma f_i \beta 2$ is a $P_4$ in $G-\{\alpha, f_j\}$, a contradiction to Lemma \ref{9_vtx_two}. Also $f_i3$ is an edge in $G$, else $\gamma f_i 23$ is a $P_4$ in $G-\{\alpha, f_j\}$, again a contradiction to Lemma \ref{9_vtx_two}. It follows that $f_j \gamma f_i 3$ is a $P_4$ in $G-\{\beta, t\}$ for $t=t_1$ or $t_2$, a contradiction. Therefore, there is an edge between $\{1,2,3\}$ and $\{4,5,6\}$. 

Note that $f_j$ has no neighbour in $\{\beta,\gamma,s_1,s_2,t_1,t_2\}$ in $G$; otherwise $G-\{\alpha, f_i\}$ would be connected. Therefore we may assume that the $P_4$ induced by $\{f_i,f_j,\gamma,\beta\}$ is $f_jf_i\beta\gamma$. Observe that, for a neighbour $x$ of $f_i$ in $\{1,2,3\}$, a neighbour $v$ of $x$ in $\{1,2,3\}$ is also a neighbour of $f_i$ in $G$; otherwise $f_jf_ixv$ is a $P_4$ in $G-\{\gamma, s\}$ or $G - \{\beta, t\}$ where $s$ is in $\{s_1,s_2\}$ and $t$ is in $\{t_1,t_2\}$, a contradiction to Lemma \ref{9_vtx_two}. It follows that $f_i$ is a neighbour of every vertex in $\{1,2,3\}$ in $G$. 
Note that $f_j \alpha$ is not an edge in $G$; otherwise $f_j\alpha 45$ would be a $P_4$ in $G-\{\gamma, s\}$ or $G - \{\beta, t\}$ where $s$ is in $\{s_1,s_2\}$ and $t$ is in $\{t_1,t_2\}$, a contradiction to Lemma \ref{9_vtx_two}. Observe that if $f_i\alpha$ is not an edge in $G$, then $f_jf_i1\alpha$ is a $P_4$ in $G-\{\gamma, s\}$ for $s=s_1$ or $s_2$, contradicting Lemma \ref{9_vtx_two}. Therefore $f_i \alpha$ is an edge in $G$. If $f_i4$ is not an edge in $G$, then $f_jf_i\alpha 4$ is a $P_4$ in $G - \{\beta,t\}$ for $t=t_1$ or $t_2$, a contradiction. 
Observe that $f_i5$ is also an edge in $G$; otherwise $f_jf_i45$ is a $P_4$ in $G-\{\gamma, s\}$ or $G - \{\beta, t\}$ where $s$ is in $\{s_1,s_2\}$ and $t$ is in $\{t_1,t_2\}$, a contradiction. Since $f_i\gamma$ is not an edge in $G$, it follows that $6=\gamma$, so $f_jf_i56$ is a $P_4$ in $G-\{\beta, t\}$ for $t=t_1$ or $t_2$, a contradiction to Lemma \ref{9_vtx_two}. Therefore our result holds.
\end{proof}

\begin{figure}[htbp]
\centering

\includegraphics{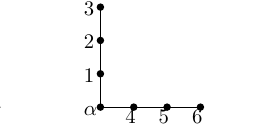}

\caption{Intersection of $G[S]$ and $G[T]$.}
\label{S_T_intersect}

\end{figure}

\begin{lemma}
\label{9_vtx_four}
Let $S$, $T,$ and $F$ be distinct subsets of $V(G)$ as in Lemma \ref{9_vtx_one}. Then there is at least one pair in $\{G[S], G[T], G[F]\}$ whose intersection is not a $T$-case.  
\end{lemma}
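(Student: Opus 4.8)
The plan is to argue by contradiction: assume that all three pairwise intersections --- $G[S]\cap G[T]$ at $\alpha$, $G[S]\cap G[F]$ at $\beta$, and $G[T]\cap G[F]$ at $\gamma$ --- are $T$-cases, and derive a contradiction. Recall that $G[S]\cong G[T]\cong G[F]\cong P_4$, that by Lemma \ref{9_vtx_one} every vertex of $G$ lies in one of $S,T,F$, and that a $T$-case at a shared vertex $v$ means $v$ is an endpoint of one of the two $P_4$'s and an interior vertex of the other. This pins down a good deal of adjacency data inside each path: an endpoint of a $P_4$ is adjacent to exactly one of the other three path-vertices, while an interior vertex is adjacent to exactly two of them. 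First I would record, for each of $S$, $T$, $F$, the within-path edges and non-edges forced by the positions of its two special vertices, which are $\{\alpha,\beta\}$ for $S$, $\{\alpha,\gamma\}$ for $T$, and $\{\beta,\gamma\}$ for $F$.

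Two symmetries cut down the bookkeeping. Because $P_4$ is self-complementary with its endpoints and interior vertices exchanged, complementation fixes the class of $T$-cases (consistent with Lemma \ref{closed_complement}) while flipping every shared vertex between \emph{endpoint} and \emph{interior}; so the eight orientation patterns (each shared vertex may be an endpoint in either of its two paths) collapse in complementary pairs. The configuration of Lemma \ref{9_vtx_one} also carries the cyclic symmetry $S\to T\to F\to S$, $\alpha\to\gamma\to\beta\to\alpha$, which permutes the three $T$-junctions. Using these I would reduce the all-$T$-case hypothesis to a short list of canonical orientation patterns before doing any edge analysis.

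The engine of the argument is Lemma \ref{9_vtx_two}: each of the six graphs $G-\{\alpha,f_i\}$, $G-\{\beta,t_i\}$, $G-\{\gamma,s_i\}$ (for $i=1,2$) is a cograph and hence contains no induced $P_4$. For each orientation pattern I would start from the known within-path edges and then, vertex by vertex, test every candidate three-edge path living inside one of these six cographs: whenever four vertices would induce a $P_4$ in such a subgraph, the offending configuration is impossible, which forbids or forces the relevant edge. Propagating these $P_4$-exclusions around the three $T$-junctions should determine the adjacencies between the private vertices $s_i,t_i,f_i$ and the special vertices $\alpha,\beta,\gamma$ tightly enough that one of the six cographs is nevertheless forced to contain an induced $P_4$ --- the desired contradiction. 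Equivalently, the forced structure would exhibit a single vertex whose deletion leaves a cograph, contradicting that $G$ is not an apex cograph.

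I expect the main obstacle to be controlling the edges that Lemma \ref{9_vtx_one} leaves undetermined: those joining private vertices of different paths, and those joining a private vertex of one path to the special vertices of another. There are many such potential edges, and each orientation sub-case constrains them differently, so the real work is to organize the $P_4$-exclusions from Lemma \ref{9_vtx_two} so that the casework terminates cleanly. The self-complementarity of $P_4$ and the cyclic symmetry are precisely what keep the number of genuinely distinct sub-cases small; I would therefore invest effort up front in reducing to a canonical orientation, rather than analyzing all eight orientation patterns independently.
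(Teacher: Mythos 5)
Your framework matches the paper's in outline --- proof by contradiction, reduction of the orientation patterns by complementation and relabeling, and Lemma \ref{9_vtx_two} as the engine for forcing edges --- but the proposal stops exactly where the proof has to start. Every step that would actually produce the contradiction is stated in the conditional (``should determine the adjacencies \dots\ tightly enough''), and you yourself flag the termination of the casework as the main obstacle without resolving it. As it stands this is a plan, not a proof.

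The concrete idea you are missing is the device the paper uses to close each case. The paper never hunts for an explicit induced $P_4$ inside one of the six subgraphs that Lemma \ref{9_vtx_two} declares to be cographs; instead, in each case it forces just enough edges to show that both $G-\{\alpha,f_i\}$ \emph{and its complement} $\overline{G}-\{\alpha,f_i\}$ are connected. Since every cograph on at least two vertices is disconnected or has a disconnected complement (it is built from $K_1$ by disjoint unions and complementations), this immediately contradicts Lemma \ref{9_vtx_two}. Establishing connectivity of a graph and of its complement requires forcing only a handful of edges, which is why each of the paper's three cases closes in one short paragraph; your criterion --- forcing a complete induced $P_4$ inside one of the six cographs --- requires pinning down far more adjacencies and has no demonstrated endpoint. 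A secondary imprecision: your eight ``orientation patterns'' undercount the cases, because when a path has one shared endpoint and one shared interior vertex you must also record whether those two vertices are adjacent within that path; accounting for this and reducing by the relabeling symmetry is what yields the paper's six cases in Figure \ref{TTT}, halved to three by Lemma \ref{closed_complement}.
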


\begin{proof}
Assume to the contrary that the intersection of every pair in $\{G[S], G[T], G[F]\}$ is a $T$-case. Then we have six distinct cases depending upon how $G[S]$, $G[T],$ and $G[F]$ intersect with each other in $G$. The cases are shown in Figure \ref{TTT}. Observe that the cases $1$, $2,$ and $3$ are complementary to cases $4$, $5,$ and $6$, respectively. By Lemma \ref{closed_complement}, it is enough to consider case 1, case 2, and case 3. 

Suppose that the intersection of $G[S]$, $G[T],$ and $G[F]$ is case 1; then the intersection of $\overline{G}[S]$, $\overline{G}[T],$ and $\overline{G}[F]$ is case 4. Observe that if  there is no edge between $t_2$ and $\{s_1,s_2\}$ in $G$, then $t_2\alpha s_1 s_2$ is an induced $P_4$ in $G -\{\beta,t_1\}$, a contradiction to Lemma \ref{9_vtx_two}. Similarly, if there is no edge between $\{s_1,s_2\}$ and $\{\gamma,t_1\}$, then $\gamma \alpha s_1 s_2$ is an induced $P_4$ in $G-\{\beta,t_1\}$, a contradiction. Therefore $G-\{\alpha, f_2\}$ is connected. 
If there is no edge between $s_2$ and $\{t_1,t_2\}$ in $\overline{G}$, then $s_2\alpha t_1 t_2$ is an induced $P_4$ in $\overline{G} - \{\gamma, s_1\}$, a contradiction to Lemma \ref{9_vtx_two}. Therefore $\overline{G}-\{\alpha,f_2\}$ is connected. This contradicts Lemma \ref{9_vtx_two} since $G-\{\alpha,f_2\}$ is a cograph. 

Next suppose that the intersection of $G[S]$, $G[T],$ and $G[F]$ is case 2. Observe that if there is no edge between $s_2$ and $\{t_2, \gamma\}$ in $G$, then $s_2\alpha t_2 \gamma$ is an induced $P_4$ in $G-\{\beta, t_1\}$, a contradiction to Lemma \ref{9_vtx_two}. Similarly, there is an edge between $s_1$ and $\{t_2,\gamma\}$ in $G$, so $G-\{\alpha,f_2\}$ is connected. Observe that if there is no edge between $t_1$ and $\{\beta, s_1\}$ in $\overline{G}$, then $t_1 \alpha \beta s_1$ is an induced $P_4$ in $G-\{\gamma, s_2\}$, a contradiction to Lemma \ref{9_vtx_two}. Therefore $\overline{G}-\{\alpha, f_2\}$ is connected, a contradiction.

Finally suppose that the intersection of $G[S]$, $G[T],$ and $G[F]$ is case 3. If there is no edge between $s_1$ and $\{t_2,\gamma\}$ in $G$, then $s_1\alpha t_2 \gamma$ is an induced $P_4$ in $G-\{\beta,t_1\}$, a contradiction to Lemma \ref{9_vtx_two}. It follows that $G-\{\alpha,f_1\}$ is connected. If there is no edge between $t_1$ and $\{\beta,s_1\}$ in $\overline{G}$, then $t_1 \alpha \beta s_1$ is an induced $P_4$ in $G-\{\gamma, s_2\}$, a contradiction. By Lemma \ref{9_vtx_two}, $\overline{G} - \{\alpha, f_1\}$ is disconnected, so we may assume that there is no edge between $f_2$ and $\{t_1, t_2\}$ in $\overline{G}$. It follows that either $\alpha f_2$ is not an edge in $\overline{G}$ and $f_2 \gamma \alpha t_1$ is an induced $P_4$ in $\overline{G}$, or $\alpha f_2$ is an edge in $\overline{G}$ and $f_2 \alpha t_1 t_2 $ is an induced $P_4$ in $\overline{G}$. Then $f_2 \gamma \alpha t_1$ is an induced $P_4$ in $\overline{G} - \{\beta, t_2\}$, a contradiction to Lemma \ref{9_vtx_two}, and $f_2 \alpha t_1 t_2 $ is an induced $P_4$ in $\overline{G} - \{\gamma, s_1\}$, again a contradiction to Lemma \ref{9_vtx_two}. Therefore $\overline{G} - \{\alpha, f_1\}$ is connected. This is a contradiction, so our result holds.
\end{proof}

\begin{figure}[htbp]
\centering

\hspace*{-1.1cm}\includegraphics[scale=0.25]{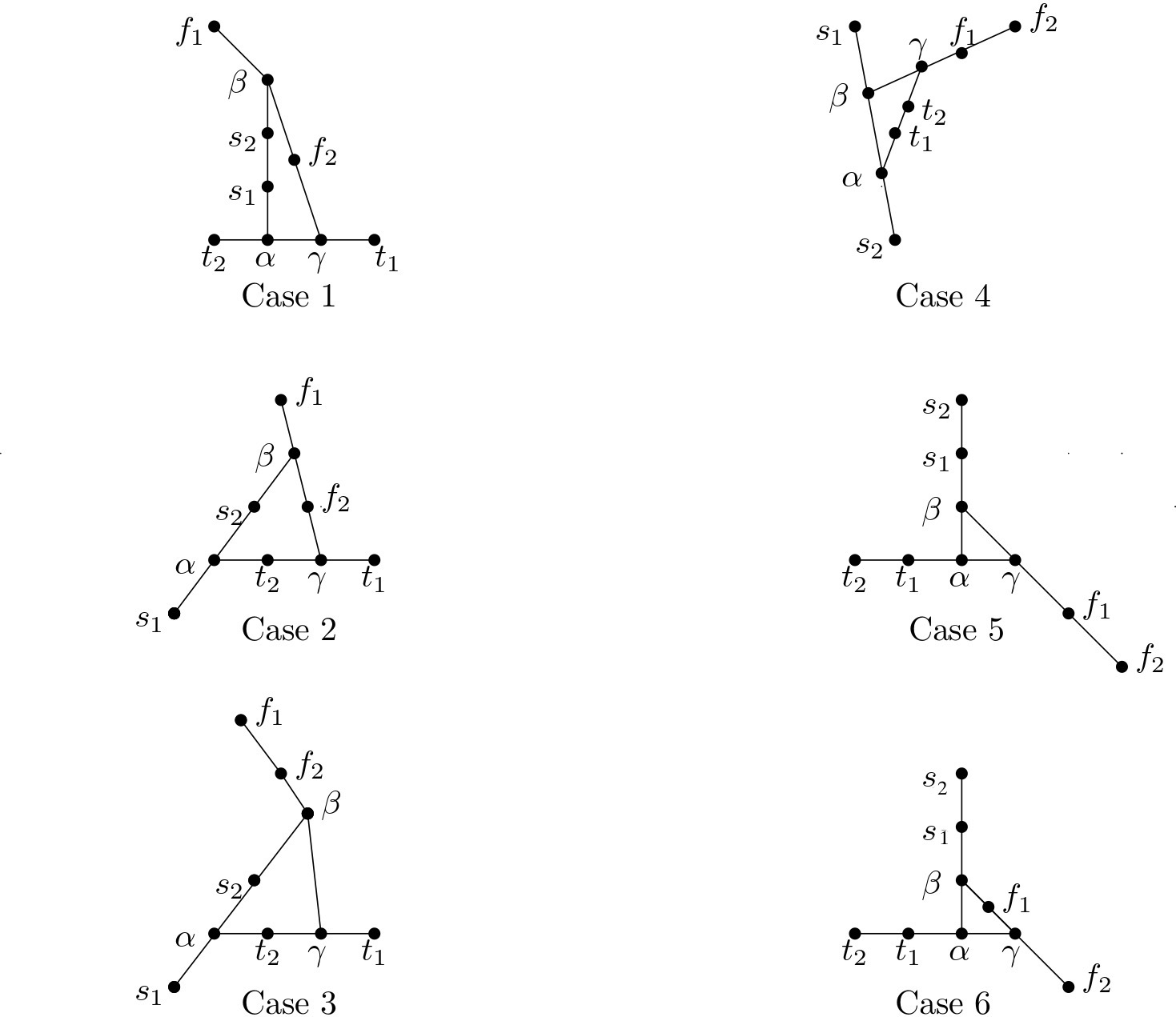}

\caption{The distinct cases where all intersections are $T$-cases.}
\label{TTT}

\end{figure}

\begin{proposition}
\label{main_9_vtx}
There is no $9$-vertex forbidden induced subgraph for apex cographs.
\end{proposition}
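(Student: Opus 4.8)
The plan is to show that the structure forced by Lemmas \ref{9_vtx_one}--\ref{9_vtx_four} is self-contradictory, by distinguishing cases according to the three pairwise intersection types of $G[S]$, $G[T]$, and $G[F]$ (each an $L$-, $T$-, or $X$-case as in Figure \ref{L_X_T}). The engine behind every contradiction is the standard structural fact about cographs: a cograph on at least two vertices is disconnected or has a disconnected complement; equivalently, any graph $H$ with $|V(H)|\ge 2$ for which both $H$ and $\overline{H}$ are connected fails to be a cograph. Since Lemma \ref{9_vtx_two} guarantees that $G-\{\alpha,f_i\}$ is a cograph, it suffices in each case to exhibit some $i\in\{1,2\}$ for which both $G-\{\alpha,f_i\}$ and $\overline{G}-\{\alpha,f_i\}$ are connected.

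First I would record the complementation behaviour of the three cases. Complementing a $P_4$ interchanges its two endpoints with its two interior vertices, so a vertex common to two $P_4$'s that is an endpoint of both (the $L$-case) becomes interior to both in the complement (the $X$-case), while the $T$-case is self-complementary. Hence passing from $G$ to $\overline{G}$ replaces the triple of intersection types by the same triple with every $L$ and $X$ interchanged. Combining this with the $S_3$-symmetry permuting the three paths $S,T,F$ (and correspondingly $\alpha,\beta,\gamma$ and the interior vertices), and invoking Lemma \ref{closed_complement}, I may analyze all configurations up to this symmetry group. By Lemma \ref{9_vtx_four} the triple is not $(T,T,T)$, so at least one intersection is an $L$- or $X$-case; after complementing if necessary and relabelling the paths, I may assume that the intersection of $G[S]$ and $G[T]$ is an $L$-case.

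With this reduction in place, Lemma \ref{9_vtx_three} immediately gives that $G-\{\alpha,f_i\}$ is connected for $i=1,2$. To finish I need the complementary connectivity, that $\overline{G}-\{\alpha,f_i\}$ is connected. Because the intersection of $\overline{G}[S]$ and $\overline{G}[T]$ is an $X$-case, this is supplied by the $X$-case analogue of Lemma \ref{9_vtx_three}, namely: \emph{if the intersection of $G[S]$ and $G[T]$ is an $X$-case, then $G-\{\alpha,f_i\}$ is connected for $i=1,2$}, applied to the forbidden graph $\overline{G}$. Granting this, both $G-\{\alpha,f_i\}$ and $\overline{G}-\{\alpha,f_i\}=\overline{G-\{\alpha,f_i\}}$ are connected, contradicting that $G-\{\alpha,f_i\}$ is a cograph by Lemma \ref{9_vtx_two}. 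Therefore no $9$-vertex forbidden induced subgraph for apex cographs exists.

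The main obstacle is proving the $X$-case analogue of Lemma \ref{9_vtx_three}. It cannot be had for free by complementing Lemma \ref{9_vtx_three}, since complementing an $L$-case statement yields connectivity information about $\overline{G}-\{\alpha,f_i\}$ rather than $G-\{\alpha,f_i\}$; so it demands its own adjacency analysis. I expect this argument to run parallel to the proof of Lemma \ref{9_vtx_three}: label the shared and interior vertices as in Figure \ref{S_T_intersect}, and then repeatedly apply Lemma \ref{9_vtx_two}, each time locating an induced $P_4$ inside one of the cograph subgraphs $G-\{\alpha,f_j\}$, $G-\{\beta,t\}$, or $G-\{\gamma,s\}$ and extracting a contradiction, until the surviving adjacencies force $G-\{\alpha,f_i\}$ to be connected. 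The bookkeeping of edges between the two interior-vertex triples, together with the placement of $f_1$ and $f_2$ relative to $\beta$ and $\gamma$, is the delicate part and the likely source of the length.
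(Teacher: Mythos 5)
Your reduction is sound and matches the paper exactly: by Lemma \ref{closed_complement}, Lemma \ref{9_vtx_four}, and the fact that complementation swaps $L$- and $X$-cases while fixing $T$-cases, one may assume $G[S]$ and $G[T]$ meet in an $L$-case; Lemma \ref{9_vtx_three} then gives connectivity of $G-\{\alpha,f_i\}$, and the whole problem reduces to showing that $\overline{G}-\{\alpha,f_i\}$ is connected for some $i$, contradicting (via Lemma \ref{9_vtx_two} and the standard cograph fact) that it must be disconnected. However, this is precisely where your proposal stops: the ``$X$-case analogue of Lemma \ref{9_vtx_three}'' that you defer with ``I expect this argument to run parallel to the proof of Lemma \ref{9_vtx_three}'' \emph{is} the actual content of the paper's proof of the proposition. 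The entire multi-paragraph adjacency analysis in $\overline{G}$ — showing there is an edge between $\{1,2\}$ and $\{5,6\}$, then between $\{3\}$ and $\{5,6\}$, then between $\{4\}$ and $\{1,2\}$, whence $\overline{G}-\{\alpha,f\}$ is connected — is the bulk of the work, and identifying it as ``the delicate part'' is not a substitute for doing it. As a blind proof attempt, this is a genuine gap, not a routine verification left to the reader.

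A secondary, structural point: you frame the missing piece as an \emph{unconditional} lemma (``if the intersection of $G[S]$ and $G[T]$ is an $X$-case, then $G-\{\alpha,f_i\}$ is connected for $i=1,2$'') to be applied to $\overline{G}$. The paper does not prove such a freestanding statement; it argues by reductio, keeping available the assumption that $\overline{G}-\{\alpha,f_i\}$ is disconnected for \emph{both} $i$ (which follows from the $L$-case side plus Lemma \ref{9_vtx_two}), and this assumption is used repeatedly inside the analysis — for instance, to conclude that $f_i$ has no neighbour in $\{1,\beta\}$ or $\{\gamma,6\}$ in $\overline{G}$, since otherwise $\overline{G}-\{\alpha,f_j\}$ would be connected. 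A proof of your unconditional version would not have this tool and so cannot simply ``run parallel'' to Lemma \ref{9_vtx_three}; either you would need to rediscover the paper's interleaved reductio (proving only that at least one $\overline{G}-\{\alpha,f_i\}$ is connected, which is all you actually need), or you would be proving a strictly stronger statement whose truth your outline gives no evidence for.
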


\begin{proof}
Assume that $G$ is a $9$-vertex forbidden induced subgraph for apex cographs. Let $S,T,$ and $F$ be distinct subsets of $V(G)$ as in Lemma \ref{9_vtx_one}. By Lemma \ref{closed_complement} and Lemma \ref{9_vtx_four}, we may assume that the intersection of $G[S]$ and $G[T]$ is an $L$-case, as shown in Figure \ref{L_case_G[S,T]}. The intersection of $\overline{G}[S]$ and $\overline{G}[T]$ is an $X$-case. By Lemma \ref{9_vtx_three}, for $i$ in $\{1,2\}$ the graph $G-\{\alpha, f_i\}$ is connected, so by Lemma \ref{9_vtx_two}, the complement $\overline{G}-\{\alpha, f_i\}$ is disconnected. 

Suppose that there is no edge between $\{1,2\}$ and $\{5,6\}$ in $\overline{G}$. Then $2\alpha 56$ and $5\alpha 21$ are $P_4$'s in $\overline{G}$. Observe that if $2 \neq \beta$ then $4 = \gamma$ and $2\alpha 56$ is an induced $P_4$ in $\overline{G}-\{\gamma, s\}$ for $s=s_1$ or $s_2$, a contradiction to Lemma \ref{9_vtx_two}. Therefore $2 = \beta$. By symmetry, $5=\gamma$. 
If there is no edge between $\{3\}$ and $\{\gamma, 6\}$ in $\overline{G}$, then $3\alpha \gamma 6$ is an induced $P_4$ in $\overline{G} - \{\beta, t\}$ for $t=t_1$ or $t_2$, a contradiction to Lemma \ref{9_vtx_two}. Therefore there is an edge between $\{3\}$ and $\{\gamma, 6\}$ in $\overline{G}$. Similarly, there is an edge between $\{4\}$ and $\{1, \beta\}$ in $\overline{G}$. 
For $\{i,j\} = \{1,2\}$, if $f_i$ has neighbours in $\{1,\beta\}$ and $\{\gamma,6\}$ in $\overline{G}$, then $\overline{G} - \{\alpha, f_j\}$ is connected, a contradiction. Therefore $f_i$ has no neighbour in $\{1, \beta\}$ or $\{\gamma, 6\}$ in $\overline{G}$. Similarly, $f_j$ has no neighbour in $\{1, \beta\}$ or $\{\gamma, 6\}$ in $\overline{G}$. Since $\{\beta, \gamma, f_1, f_2\}$ induces an induced $P_4$, we may assume that the $P_4$ is $\gamma f_i f_j \beta$. Note that $f_j$ has no neighbour in $\{3, \gamma, 6\}$ in $\overline{G}$, and $f_i$ has no neighbour in $\{1,\beta,4\}$ in $\overline{G}$. It follows that either $4\alpha \gamma f_i$ is an induced $P_4$ in $\overline{G}$, or $\alpha f_i$ is an edge in $\overline{G}$. Thus, $4 \alpha \gamma f_i$ is an induced $P_4$ in $\overline{G}-\{\beta,t\}$ for $t=t_1$ or $t_2$. This is a contradiction to Lemma \ref{9_vtx_two} so we may assume that $\alpha f_i$ is an edge. 
It follows that $1 \beta \alpha f_i$ is an induced $P_4$ in $\overline{G} - \{\gamma, s\}$ for $s=s_1$ or $s_2$, a contradiction. Therefore we may assume that there is an edge between $\{1,2\}$ and $\{5,6\}$ in $\overline{G}$.

Suppose that there is no edge between $\{3\}$ and $\{5,6\}$ in $\overline{G}$. Then $3 \alpha 56$ is an induced $P_4$, so by Lemma \ref{9_vtx_two} we have $3=\beta$ or $4 = \gamma$.  If $4=\gamma$, then $3\alpha 56$ is an induced $P_4$ in $\overline{G}-\{\gamma, s\}$ for $s=s_1$ or $s_2$, a contradiction to Lemma \ref{9_vtx_two}. Therefore $3=\beta$ and $4 \neq \gamma$. If there is no edge between $\{4\}$ and $\{1,2\}$ in $\overline{G}$, then $4\alpha 21$ is an induced $P_4$ in $\overline{G}-\{\beta,t\}$ where $t$ is in $\{t_1,t_2\}$, a contradiction to Lemma \ref{9_vtx_two}. Therefore there is an edge between $\{4\}$ and $\{1,2\}$ in $\overline{G}$. 
Since $\gamma \in \{5,6\}$ and there is no edge between $\{3\}$ and $\{5,6\}$ in $\overline{G}$, it follows that $\gamma \beta$ is not an edge in $\overline{G}$. 
Observe that $\{\gamma, \beta, f_i, f_j\}$ induces an induced $P_4$ in $\overline{G}$ so both $\gamma$ and $\beta$ have a neighbour in $\{f_i, f_j\}$ in $\overline{G}$. If both $\gamma$ and $\beta$ are neighbours of $f_i$ in $\overline{G}$, then $\overline{G}-\{\alpha, f_j\}$ is connected, a contradiction. 

Since $\beta \gamma$ is not an edge in $\overline{G}$, it follows that the $P_4$ induced by $\{\gamma, \beta, f_i, f_j\}$ in $\overline{G}$ is $\beta f_i f_j \gamma$ or $\beta f_j f_i \gamma$. In the former case $f_i$ has no neighbour in $\{5,6\}$ in $\overline{G}$, else $\overline{G}-\{\alpha, f_j\}$ would be connected, a contradiction. 
It follows that $f_i\beta \alpha 5$ is an induced $P_4$ unless $\alpha f_i$ is an edge in $\overline{G}$. 
Now $f_i\beta \alpha 5$ is an induced $P_4$ in $\overline{G}-\{\gamma,s\}$ for $s=s_1$ or $s_2$. This contradicts Lemma \ref{9_vtx_two}, so $\alpha f_i$ is an edge in $\overline{G}$. 
It follows that $f_i \alpha 56$ is an induced $P_4$ in $\overline{G}-\{\beta,t\}$ for $t$ in $\{t_1, t_2\}$, a contradiction to Lemma \ref{9_vtx_two}. Therefore in the former case there is an edge between $\{3\}$ and $\{5,6\}$ in $\overline{G}$. The latter case is similar. 

Similarly, there is an edge between $\{4\}$ and $\{1,2\}$ in $\overline{G}$.

It now follows that $\overline{G}-\{\alpha,f\}$ is connected for $f$ in $\{f_1,f_2\}$, a contradiction. This completes the proof that there is no 9-vertex forbidden induced subgraph for apex cographs.
\end{proof}

\begin{figure}[htbp]
\centering

\includegraphics{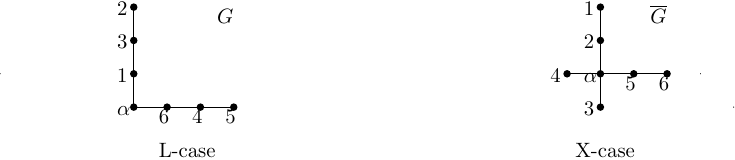}

\caption{$L$-case intersection of $G[S]$ and $G[T]$, where $S= \{1,2,3\}$ and $T =\{4,5,6\}$.}
\label{L_case_G[S,T]}

\end{figure}

\section*{Declarations}

V. Sivaraman was partially supported by Simons Foundation Travel Support for Mathematicians (Award No. 855469).


\begin{thebibliography}{99}


\bibitem{boro} M. Borowiecki, E. Drgas-Burchardt, and E. Sidorowicz,  $\mathcal{P}$-Apex graphs, {\em Discuss. Math. Graph Theory}, \textbf{38}(2) (2018), 323-349.

\bibitem{corneil} D.G. Corneil, H. Lerchs, and L. Stewart Burlingham,  Complement reducible graphs, {\em Discrete Appl. Math.} \textbf{3} (1981), 163--174.

\bibitem{nauty} B.D. McKay and A. Piperno, Practical graph isomorphism II, {\em J. Symbolic Comput.} \textbf{60} (2014), 94--112.

\bibitem{sage} The Sage Developers, SageMath, the Sage Mathematics Software System (Version 8.2), 2019, http://www.sagemath.org.


\end{thebibliography}
\end{document}